\numberwithin{equation}{section}
\begin{document}
\theoremstyle{plain}
\newtheorem{thm}{Theorem}[section]
\newtheorem{theorem}[thm]{Theorem}
\newtheorem{lemma}[thm]{Lemma}
\newtheorem{corollary}[thm]{Corollary}
\newtheorem{proposition}[thm]{Proposition}
\newtheorem{conjecture}[thm]{Conjecture}
%%%%%%%%%%%%%%%%%%%% Text roman %%%%%%%%%%%%%%%%%%%%%%%%%%%%%
\theoremstyle{definition}
\newtheorem{remark}[thm]{Remark}
\newtheorem{remarks}[thm]{Remarks}
\newtheorem{definition}[thm]{Definition}
\newtheorem{example}[thm]{Example}

\newcommand{\caA}{{\mathcal A}}
\newcommand{\caB}{{\mathcal B}}
\newcommand{\caC}{{\mathcal C}}
\newcommand{\caD}{{\mathcal D}}
\newcommand{\caE}{{\mathcal E}}
\newcommand{\caF}{{\mathcal F}}
\newcommand{\caG}{{\mathcal G}}
\newcommand{\caH}{{\mathcal H}}
\newcommand{\caI}{{\mathcal I}}
\newcommand{\caJ}{{\mathcal J}}
\newcommand{\caK}{{\mathcal K}}
\newcommand{\caL}{{\mathcal L}}
\newcommand{\caM}{{\mathcal M}}
\newcommand{\caN}{{\mathcal N}}
\newcommand{\caO}{{\mathcal O}}
\newcommand{\caP}{{\mathcal P}}
\newcommand{\caQ}{{\mathcal Q}}
\newcommand{\caR}{{\mathcal R}}
\newcommand{\caS}{{\mathcal S}}
\newcommand{\caT}{{\mathcal T}}
\newcommand{\caU}{{\mathcal U}}
\newcommand{\caV}{{\mathcal V}}
\newcommand{\caW}{{\mathcal W}}
\newcommand{\caX}{{\mathcal X}}
\newcommand{\caY}{{\mathcal Y}}
\newcommand{\caZ}{{\mathcal Z}}
%mathfrak
\newcommand{\fA}{{\mathfrak A}}
\newcommand{\fB}{{\mathfrak B}}
\newcommand{\fC}{{\mathfrak C}}
\newcommand{\fD}{{\mathfrak D}}
\newcommand{\fE}{{\mathfrak E}}
\newcommand{\fF}{{\mathfrak F}}
\newcommand{\fG}{{\mathfrak G}}
\newcommand{\fH}{{\mathfrak H}}
\newcommand{\fI}{{\mathfrak I}}
\newcommand{\fJ}{{\mathfrak J}}
\newcommand{\fK}{{\mathfrak K}}
\newcommand{\fL}{{\mathfrak L}}
\newcommand{\fM}{{\mathfrak M}}
\newcommand{\fN}{{\mathfrak N}}
\newcommand{\fO}{{\mathfrak O}}
\newcommand{\fP}{{\mathfrak P}}
\newcommand{\fQ}{{\mathfrak Q}}
\newcommand{\fR}{{\mathfrak R}}
\newcommand{\fS}{{\mathfrak S}}
\newcommand{\fT}{{\mathfrak T}}
\newcommand{\fU}{{\mathfrak U}}
\newcommand{\fV}{{\mathfrak V}}
\newcommand{\fW}{{\mathfrak W}}
\newcommand{\fX}{{\mathfrak X}}
\newcommand{\fY}{{\mathfrak Y}}
\newcommand{\fZ}{{\mathfrak Z}}

\newcommand{\bA}{{\mathbb A}}
\newcommand{\bB}{{\mathbb B}}
\newcommand{\bC}{{\mathbb C}}
\newcommand{\bD}{{\mathbb D}}
\newcommand{\bE}{{\mathbb E}}
\newcommand{\bF}{{\mathbb F}}
\newcommand{\bG}{{\mathbb G}}
\newcommand{\bH}{{\mathbb H}}
\newcommand{\bI}{{\mathbb I}}
\newcommand{\bJ}{{\mathbb J}}
\newcommand{\bK}{{\mathbb K}}
\newcommand{\bL}{{\mathbb L}}
\newcommand{\bM}{{\mathbb M}}
\newcommand{\bN}{{\mathbb N}}
\newcommand{\bO}{{\mathbb O}}
\newcommand{\bP}{{\mathbb P}}
\newcommand{\bQ}{{\mathbb Q}}
\newcommand{\bR}{{\mathbb R}}
\newcommand{\bT}{{\mathbb T}}
\newcommand{\bU}{{\mathbb U}}
\newcommand{\bV}{{\mathbb V}}
\newcommand{\bW}{{\mathbb W}}
\newcommand{\bX}{{\mathbb X}}
\newcommand{\bY}{{\mathbb Y}}
\newcommand{\bZ}{{\mathbb Z}}
\newcommand{\id}{{\rm id}}
%%%%%%%%%%%%%%%%%%%%%%%%%%%%%%%%%%%%%%%%%%%%%%%%%%%%%%%%%%%%%%

\title[Volume conjecture for $SU(n)$-invariants] {Volume conjecture for $SU(n)$-invariants}

\author[Qingtao Chen, Kefeng Liu,
Shengmao Zhu]{Qingtao Chen, Kefeng Liu, Shengmao Zhu}

\address{Department of Mathematics \\
ETH Zurich \\
8092 Zurich \\ Switzerland } \email{qingtao.chen@math.ethz.ch}

\address{Center of Mathematical Sciences \\
Zhejiang University, Box 310027 \\
Hangzhou, P. R. China. }
\address{Department of mathematics \\
University of California at Los Angeles, Box 951555\\
Los Angeles, CA, 90095-1555.} \email{liu@math.ucla.edu}

\address{Center of Mathematical Sciences \\
Zhejiang University, Box 310027 \\
Hangzhou, P. R. China. } \email{zhushengmao@gmail.com}

%%%%%%%%%%%%%%%%%%%%%%%%%%%%%%%%%%%%%%%%%%%%%%%%%%%%%%%%%%%%%%%%%%%%%%%%%%%%%%%%%%%%%%%%%%%%%%%%
%                                          Abstract                                            %
%%%%%%%%%%%%%%%%%%%%%%%%%%%%%%%%%%%%%%%%%%%%%%%%%%%%%%%%%%%%%%%%%%%%%%%%%%%%%%%%%%%%%%%%%%%%%%%%
\begin{abstract}
This paper discuss an intrinsic relation among congruent relations \cite{CLPZ}, cyclotomic expansion and Volume Conjecture for $SU(n)$ invariants. Motivated by the congruent relations for $SU(n)$ invariants obtained
in our previous work \cite{CLPZ}, we study certain limits of the
$SU(n)$ invariants at various roots of unit. First, we prove a new
symmetry property for the $SU(n)$ invariants by using a symmetry of
colored HOMFLYPT invariants. Then we propose some conjectural
formulas including the cyclotomic expansion conjecture and volume
conjecture for $SU(n)$ invariants (specialization of colored
HOMFLYPT invariants). We also give the proofs of these conjectural
formulas for the case of figure-eight knot.

\end{abstract}

\maketitle

%\setcounter{tocdepth}{5} \setcounter{page}{1}

%\tableofcontents
%\newpage

\section{Introduction}
In our previous work joint with P. Peng \cite{CLPZ}, we introduced
the $SU(n)$ quantum invariant for a link $\mathcal{L}$ as follow:
\begin{align} \label{defSU(n)}
&J_{N}^{SU(n)}(\mathcal{L};q)=\left( \frac{q^{-2lk(\mathcal{L})\kappa _{(N)}}t^{-2lk(%
\mathcal{L})N}W_{(N)(N),...,(N)}(\mathcal{L};q,t)}{s_{(N)}(q,t)}\right)
|_{t=q^{n}}\\\nonumber
&=\frac{q^{-2lk(\mathcal{L})N(N-1)}q^{-2nlk(\mathcal{L})N}W_{(N)(N),...,(N)}(%
\mathcal{L};q,q^{n})}{s_{(N)}(q,q^{n})}\\\nonumber &
=\frac{q^{-2lk(\mathcal{L})(N(N-1)+nN)}W_{(N)(N),...,(N)}(\mathcal{L},q,q^{n})}{%
s_{(N)}(q,q^{n})}
\end{align}
where $W_{(N)(N),...,(N)}(\mathcal{L},q,t)$ is the colored HOMFLYPT
invariants of $\mathcal{L}$, see Section 2.1 for the definitions. In
particular, when $n=2$,
$J_N^{SU(2)}(\mathcal{L};q)=J_N(\mathcal{L};q)$ is the classical
(reduced) colored Jones polynomial with a suitable variable changes,
see Section 7 in \cite{CLPZ} for detail.  In this paper, by using
one of the symmetries of the colored HOMFLYPT invariants obtained in
\cite{CLPZ}, we prove the following  symmetry of the $SU(n)$
invariant about the rank $n$.
\begin{theorem}
For a knot $\mathcal{K}$ and two integers $n\geq m\geq 2$, we have
\begin{align}
J_{N}^{SU(n)}(\mathcal{K};q)\equiv J_{N}^{SU(n-m)}(\mathcal{K};q)
\mod [m].
\end{align}
\end{theorem}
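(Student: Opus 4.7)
The plan is to use the fact that, for a knot $\mathcal{K}$, the self-linking number $lk(\mathcal{K})$ vanishes, so the defining formula (\ref{defSU(n)}) collapses to
\[
J_N^{SU(n)}(\mathcal{K};q)=\frac{W_{(N)}(\mathcal{K};q,q^n)}{s_{(N)}(q,q^n)}=:\hat{W}_{(N)}(\mathcal{K};q,q^n),
\]
the reduced colored HOMFLYPT invariant evaluated at $t=q^n$. The theorem thus reduces to the congruence $\hat{W}_{(N)}(\mathcal{K};q,q^n)\equiv \hat{W}_{(N)}(\mathcal{K};q,q^{n-m})\pmod{[m]}$.

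The key input is a symmetry of the colored HOMFLYPT polynomial from \cite{CLPZ}, which I would apply in the following form: for a knot $\mathcal{K}$ the reduced colored HOMFLYPT invariant $\hat{W}_{(N)}(\mathcal{K};q,t)$ is a Laurent polynomial in $q^{\pm 1}$ and $t^{\pm 2}$, i.e.\ it is invariant under $t\to -t$. This is the familiar statement that only even powers of the framing variable appear in the reduced HOMFLYPT of a knot, and it follows from the symmetry $W_{(N)}(\mathcal{K};q,-t)=(-1)^N W_{(N)}(\mathcal{K};q,t)$ together with the elementary identity $s_{(N)}(q,-t)=(-1)^N s_{(N)}(q,t)$. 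I would begin by recalling the precise statement from \cite{CLPZ} and expanding
\[
\hat{W}_{(N)}(\mathcal{K};q,t)=\sum_{k}a_k(q)\,t^{2k}
\]
as a finite Laurent sum with coefficients $a_k(q)\in\mathbb{Z}[q^{\pm 1}]$.

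Substituting $t=q^n$ and $t=q^{n-m}$ and taking the difference yields
\[
J_N^{SU(n)}(\mathcal{K};q)-J_N^{SU(n-m)}(\mathcal{K};q)=\sum_{k}a_k(q)\,q^{2k(n-m)}\bigl(q^{2km}-1\bigr).
\]
For every integer $k$ one has $q^{2km}-1=q^{km}(q^{km}-q^{-km})$, and $q^{km}-q^{-km}$ is divisible by $q^m-q^{-m}=(q-q^{-1})[m]$; in particular $[m]$ divides $q^{2km}-1$ in $\mathbb{Z}[q^{\pm 1}]$. Hence every term of the sum, and therefore the whole difference, is divisible by $[m]$, which is the desired congruence.

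The main obstacle I expect is the careful identification and application of the HOMFLYPT symmetry from \cite{CLPZ}: one needs $t\to -t$ invariance for the \emph{reduced} invariant $\hat{W}_{(N)}$ rather than for $W_{(N)}$ alone, and one needs to check that the framing prefactor $q^{-2lk(\mathcal{K})(N(N-1)+nN)}$ in (\ref{defSU(n)}) is harmless in the knot case. A secondary technical point is that the expansion $\hat{W}_{(N)}(\mathcal{K};q,t)=\sum_k a_k(q)t^{2k}$ must have coefficients in $\mathbb{Z}[q^{\pm 1}]$, which in turn requires $s_{(N)}(q,t)$ to divide $W_{(N)}(\mathcal{K};q,t)$ integrally; this is a known feature of the reduced colored HOMFLYPT of a knot but should be cited carefully so that the $[m]$-divisibility conclusion lives in the correct ring.
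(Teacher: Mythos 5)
Your proof is correct and rests on the same key input as the paper, namely the symmetry $W_{(N)}(\mathcal{K};q,-t)=(-1)^{N}W_{(N)}(\mathcal{K};q,t)$ from \cite{CLPZ} together with $s_{(N)}(q,-t)=(-1)^{N}s_{(N)}(q,t)$, so that the reduced invariant $P_N(\mathcal{K};q,t)$ is invariant under $t\mapsto -t$. Where you diverge is in how the congruence is extracted from this symmetry. The paper specializes $q=e^{s\pi\sqrt{-1}/m}$ and checks, separately for $s$ odd and $s$ even, that $J_N^{SU(n)}$ and $J_N^{SU(n-m)}$ agree at every $2m$-th root of unity, then concludes divisibility by $[m]$ (this last passage, which still requires knowing the difference is a Laurent polynomial, is left implicit there). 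You instead upgrade the $t\mapsto -t$ invariance to the statement that $P_N(\mathcal{K};q,t)=\sum_k a_k(q)t^{2k}$ contains only even powers of $t$, and then verify directly that $[m]$ divides $q^{2km}-1$ in $\mathbb{Z}[q^{\pm1}]$. Your route is purely algebraic, avoids the case split on the parity of $s$, and makes explicit the one hypothesis both arguments silently share: that the reduced colored HOMFLYPT invariant of a knot is an honest Laurent polynomial in $q^{\pm1},t^{\pm1}$ with integer coefficients, which is needed for the congruence to make sense in $\mathbb{Z}[q,q^{-1}]$ at all. Flagging and citing that integrality (it is part of what \cite{CLPZ} establishes) is the only thing required to make your write-up complete.
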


\begin{remark}
As in \cite{CLPZ}, we use the notation $[m]=q^{m}-q^{-m}$ throughout
this paper, and $A\equiv B \mod [m]$ denotes $\frac{A-B}{C} \in
\mathbb{Z}[q,q^{-1}]$.
\end{remark}
Formula (1.2) can be viewed as a new congruent relation with respect to the
rank $n$. In \cite{CLPZ}, we have proposed the following congruent
relation for $J_{N}^{SU(n)}(\mathcal{K};q)$ which reveals some
symmetries with respect to the color $N$:
\begin{align}
J_{N}^{SU(n)}(\mathcal{K};q)-J_k^{SU(n)}(\mathcal{K};q)\equiv 0 \mod
[N-k][N+k+n],
\end{align}
where $N\geq k\geq 0$. In fact, the congruent relation (1.3) is an
easy consequence of the following more general conjecture.
\begin{conjecture}[cyclotomic expansions for
$SU(n)$ invariants] For any knot $\mathcal{K}$, there exist
$H_k^{(n)}(\mathcal{K})\in \mathbb{Z}[q,q^{-1}]$, independent of $N$
($N\geq 0$). Such that
\begin{align}
J_{N}^{SU(n)}(\mathcal{K};q)=\sum_{k=0}^{N}C_{N+1,k}^{(n)}H_k^{(n)}(\mathcal{K}),
\end{align}
where $C_{N+1,k}^{(n)}=[N-(k-1)][N-(k-2)]\cdots
[N-1][N][N+n][N+n+1]\cdots [N+n+(k-1)]$, for $k=1,..,N$, and
$C_{N+1,0}^{(n)}=1$.  In particular,
$J_0^{SU(n)}(\mathcal{K};q)=H_0^{(n)}(\mathcal{K})=1$.
\end{conjecture}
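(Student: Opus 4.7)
The plan is to reduce the conjecture to a single integrality claim, and then to attack that claim via an $SU(n)$ analogue of Habiro's cyclotomic expansion of the colored Jones polynomial.

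\textbf{Reduction to integrality.} The matrix $\bigl(C_{k+1,j}^{(n)}\bigr)_{0\le j\le k}$ is lower triangular, with nonzero diagonal entries $C_{k+1,k}^{(n)}=[1][2]\cdots[k]\cdot[k+n][k+n+1]\cdots[2k+n-1]\in\mathbb{Q}(q)$, so (1.4) uniquely determines rational functions $H_k^{(n)}(\mathcal{K})\in\mathbb{Q}(q)$ by the recursion
\[
H_0^{(n)}=1,\qquad H_k^{(n)}(\mathcal{K})=\frac{1}{C_{k+1,k}^{(n)}}\Bigl(J_k^{SU(n)}(\mathcal{K};q)-\sum_{j=0}^{k-1}C_{k+1,j}^{(n)}H_j^{(n)}(\mathcal{K})\Bigr).
\]
Because the system is triangular, the $H_k^{(n)}$ extracted from $J_0^{SU(n)},\ldots,J_k^{SU(n)}$ agree with those extracted from any longer initial segment, so (1.4) automatically holds in $\mathbb{Q}(q)$ for every $N$. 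The substantive content of the conjecture is therefore that each $H_k^{(n)}(\mathcal{K})$ lies in $\mathbb{Z}[q,q^{-1}]\subset\mathbb{Q}(q)$.

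\textbf{Cyclotomic basis in the $U_q(\mathfrak{sl}_n)$ skein.} Set $X:=q^{2N+n}+q^{-2N-n}$ and $a_i:=q^{n+2i}+q^{-n-2i}$. A direct computation gives $[N-i][N+n+i]=X-a_i$ and in particular $X-a_k=[N-k][N+n+k]$, so
\[
C_{N+1,k}^{(n)}=\prod_{i=0}^{k-1}(X-a_i)
\]
is a polynomial of degree $k$ in a single central quantity $X$, which is the eigenvalue on $S^N\mathbb{C}^n$ of a suitable ribbon/Casimir element $\mathcal{C}^{(n)}$. Working in the HOMFLYPT skein of the annulus specialized to $U_q(\mathfrak{sl}_n)$, I would construct central ``cyclotomic projectors''
\[
\omega_k^{(n)}\;:=\;\prod_{i=0}^{k-1}\bigl(\mathcal{C}^{(n)}-a_i\cdot\mathrm{id}\bigr),
\]
so that $\omega_k^{(n)}$ acts on $S^N\mathbb{C}^n$ by the scalar $C_{N+1,k}^{(n)}$. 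Applied to $U_q(\mathfrak{sl}_n)$, Habiro's universal tangle invariant construction should then furnish a universal invariant $J_\mathcal{K}^{\mathrm{univ}}$ in a completion of the center admitting a unique expansion $J_\mathcal{K}^{\mathrm{univ}}=\sum_{k\ge 0}H_k^{(n)}(\mathcal{K})\,\omega_k^{(n)}$; tracing in $S^N\mathbb{C}^n$ recovers (1.4) and identifies the coefficients with the $H_k^{(n)}$ of Step~1.

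\textbf{Main obstacle and consistency checks.} Existence of $\omega_k^{(n)}$ with the correct eigenvalues is essentially formal. The \textbf{hard step} is to show that $\{\omega_k^{(n)}\}_{k\ge 0}$ forms a topological basis of a suitable \emph{integral} completion of the center over $\mathbb{Z}[q,q^{-1}]$, and that $J_\mathcal{K}^{\mathrm{univ}}$ lies in this integral completion; only then does the expansion yield $H_k^{(n)}\in\mathbb{Z}[q,q^{-1}]$. For $n=2$ this integrality is Habiro's deep theorem on the unified WRT invariant of knots, and it relies heavily on the explicit integral structure of the Kauffman bracket skein module; for $n\ge 3$ the analogous integral structure on the $U_q(\mathfrak{sl}_n)$ universal invariant appears to be open and constitutes the technical heart of the argument. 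As internal sanity checks, both Theorem~1.1 and the congruence (1.3) are automatic consequences of (1.4): for (1.3), terms with $j\ge k+1$ in (1.4) contain the factor $X-a_k=[N-k][N+n+k]$ of $C_{N+1,j}^{(n)}$, while for $j\le k$ the differences $C_{N+1,j}^{(n)}-C_{k+1,j}^{(n)}=\prod_i(X-a_i)-\prod_i(a_k-a_i)$ vanish at $X=a_k$ and are therefore divisible by $X-a_k=[N-k][N+n+k]$ in $\mathbb{Z}[q,q^{-1}]$. Such matches provide nontrivial verifications that the proposed integral cyclotomic basis is compatible with the structural results already available in the paper.
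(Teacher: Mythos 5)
You are attempting to prove a statement that the paper itself leaves as a conjecture: the authors give no proof of Conjecture 1.3, only explicit verifications for the figure-eight knot and the trefoil (Examples 2.5 and 2.6, via closed formulas from \cite{IMMM} and \cite{FGS}) together with the numerical data in the appendix, and Remark 2.7 notes that even the weaker variant with denominators $[k]!$ (Conjecture 2.8, after Nawata--Oblomkov) is itself only conjectural. Your reduction to integrality is correct and cleanly done: the triangular system with nonzero diagonal entries $C_{k+1,k}^{(n)}=[1]\cdots[k]\,[k+n]\cdots[2k+n-1]$ determines $H_k^{(n)}\in\mathbb{Q}(q)$ uniquely and consistently for all $N$, and your identity $[N-i][N+n+i]=X-a_i$ with $X=q^{2N+n}+q^{-2N-n}$ and $a_i=q^{n+2i}+q^{-n-2i}$ is exactly the mechanism behind the paper's identities (2.14) and the congruence (2.15), so your consistency checks are sound.

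Nevertheless the argument has a genuine gap, which you yourself flag: the claim that $\{\omega_k^{(n)}\}_{k\ge 0}$ forms a topological basis of an \emph{integral} completion of the center over $\mathbb{Z}[q,q^{-1}]$ and that the universal invariant lies in that completion. This is not a technical loose end to be routinely filled; it is the entire content of the conjecture. For $n=2$ it is Habiro's theorem \cite{Hab}, whose proof rests on special features of $U_q(\mathfrak{sl}_2)$ (the integral form of the universal invariant via bottom tangles and the explicit ribbon Hopf algebra structure) with no known analogue for $n\ge 3$. Constructing central elements $\omega_k^{(n)}$ acting by the scalar $C_{N+1,k}^{(n)}$ on $S^N\mathbb{C}^n$ is, as you say, essentially formal, and by itself only recovers $H_k^{(n)}\in\mathbb{Q}(q)$ --- which you already had from the triangular inversion. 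So the proposal is a reasonable reformulation of the conjecture together with correct consistency checks, not a proof. If you want an unconditional statement at the level of what this paper actually establishes, the available route is the one the authors take: verify (1.4) knot by knot from explicit formulas for $J_N^{SU(n)}$, for instance $H_k^{(n)}(4_1)=\frac{[n-2+k]!}{[k]!\,[n-2]!}$ for the figure-eight knot.
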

Conjecture 1.3 is a generalization of the cyclotomic expansion for
colored Jones polynomials due to K. Habiro \cite{Hab}. By some
direct calculations, we find that Conjecture 1.3 holds for
figure-eight knot $4_1$ and trefoil knot $3_1$. See examples 2.5 and
2.6 in Section 2.

Next, we study the limit behaviors of  $SU(n)$ invariant
$J_{N}^{SU(n)}(\mathcal{K};q)$ at various roots of unit. For
convenience,  we introduce the notation
$\xi_{N,a}(s)=\exp(\frac{s\pi \sqrt{-1}}{N+a})$, where $a,s\in
\mathbb{Z}$. Then for a fixed $n\geq 2$, we have
\begin{conjecture}
(i) If $a\in \mathbb{Z}\backslash \{1,2,..,n-1\}$, then for any knot
$\mathcal{K}$:
\begin{align}
2\pi s\lim_{N\rightarrow \infty}\frac{\log
J_{N}^{SU(n)}(\mathcal{K};\xi_{N,a}(s))}{N+1}=0.
\end{align}
(ii) If $a\in \{1,2,...,n-1\}$, then
\begin{align}
2\pi s \lim_{N\rightarrow \infty} \frac{\log
J_{N}^{SU(n)}(\mathcal{K};\xi_{N,a}(s))
}{N+1}=Vol(S^3/\mathcal{K})+\sqrt{-1}CS(S^3/\mathcal{K})
\end{align}
for any hyperbolic knot $\mathcal{K}$.
\end{conjecture}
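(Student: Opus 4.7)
The plan is to combine the cyclotomic expansion of Conjecture 1.3 with a dilogarithm saddle-point analysis. Granted the expansion $J_{N}^{SU(n)}(\mathcal{K};q) = \sum_{k=0}^{N} C_{N+1,k}^{(n)}(q)\, H_k^{(n)}(\mathcal{K})$---which the authors establish directly for $4_1$ and $3_1$---substituting $q = \xi_{N,a}(s)$ reduces the problem to the large-$N$ asymptotics of a finite sum of products of sines. Writing $[N-j] = 2\sqrt{-1}\sin\bigl(\pi s(N-j)/(N+a)\bigr)$ and $[N+n+j] = 2\sqrt{-1}\sin\bigl(\pi s(N+n+j)/(N+a)\bigr)$, then scaling $k = x(N+1)$ and $j = t(N+1)$, the quantity $\frac{1}{N+1}\log|C_{N+1,k}^{(n)}(\xi_{N,a}(s))|$ becomes a Riemann sum for an explicit integral of $\log|2\sin|$ terms whose antiderivative is a Clausen/Lobachevsky function. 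Assembling the sum over $k$ produces an integral $\int_0^1 e^{(N+1) f_{n,a,s}(x)}\, dx$ with a dilogarithmic potential $f_{n,a,s}$ carrying the $a$-dependence through a phase shift.

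For part (i), when $a \in \mathbb{Z}\backslash\{1,\ldots,n-1\}$: the extreme regimes $a \leq 0$ and $a \geq n$ are elementary, because one of the factors $[N]$ or $[N+n]$ vanishes at $\xi_{N,a}(s)$, killing $C_{N+1,k}^{(n)}$ for all $k \geq 1$ and pinning the sum to $H_0^{(n)} = 1$. In the remaining sub-cases one checks that $\mathrm{Re}\, f_{n,a,s}$ attains its supremum on $[0,1]$ at the boundary $x = 0$ with value zero, so $|J_{N}^{SU(n)}(\mathcal{K};\xi_{N,a}(s))|$ grows at most polynomially in $N$ and the logarithmic limit vanishes.

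For part (ii), $a \in \{1,\ldots,n-1\}$ is the resonant window in which $f_{n,a,s}$ admits an interior critical point $x_*$ with positive real part; a saddle-point argument then gives $\log J_{N}^{SU(n)}(\mathcal{K};\xi_{N,a}(s))/(N+1) \to f_{n,a,s}(x_*)$. For $\mathcal{K} = 4_1$ the coefficients $H_k^{(n)}(4_1)$ are explicit products of quantum integers, so the saddle equation in $x_*$ coincides (up to a shift) with the $PSL(2,\mathbb{C})$-gluing equation for $S^3 \setminus 4_1$, and the critical value is computed in closed form to equal $\frac{1}{2\pi s}\bigl(\mathrm{Vol}(S^3 \setminus 4_1) + \sqrt{-1}\, CS(S^3 \setminus 4_1)\bigr)$ via the Bloch--Wigner identity.

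The main obstacles are twofold. First, Conjecture 1.3 itself is open beyond $4_1$ and $3_1$---it is the $SU(n)$-refinement of Habiro's cyclotomic expansion, and without it the sum representation used above is unavailable. Second, even granting (1.4), identifying the saddle value with the complex volume for an arbitrary hyperbolic knot requires realizing the $SU(n)$ critical equations as deformations of the $PSL(2,\mathbb{C})$-gluing equations, which is essentially the content of the Volume Conjecture itself. The figure-eight knot is tractable precisely because its one-variable saddle admits a closed-form solution whose dilogarithmic value matches the known complex volume by direct computation.
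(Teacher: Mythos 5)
Keep in mind that the statement is Conjecture 1.4; the paper does not prove it in general, but only (a) part (i), as a consequence of the congruent relation (2.15) (itself conditional on Conjecture 2.4 for $n>2$, and known for $n=2$ via Habiro's theorem), and (b) the full statement for the figure-eight knot. Your proposal correctly identifies both conjectural inputs, and its overall shape (cyclotomic expansion plus Laplace-type asymptotics of the resulting sum) matches the paper's. But there are two concrete defects. In part (i) you assert that for $a\le 0$ or $a\ge n$ one of $[N]$, $[N+n]$ vanishes at $q=\xi_{N,a}(s)$, killing every $C^{(n)}_{N+1,k}$ with $k\ge 1$ and pinning the sum to $H_0^{(n)}=1$. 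Since $[l]=2\sqrt{-1}\sin\frac{ls\pi}{N+a}$ at this root of unity, the quantum integer that actually vanishes is $[N+a]$, so your claim is true only for $a=0$ and $a=n$. The correct mechanism, which is what the paper uses, is that $[N+a]$ occurs in $C^{(n)}_{N+1,k}$ as the factor $[N-(-a)]$ when $a\le 0$ (present for $k\ge -a+1$) and as $[N+n+(a-n)]$ when $a\ge n$ (present for $k\ge a-n+1$); hence the cyclotomic sum truncates to a fixed finite number of bounded terms --- equivalently, the congruent relation gives $J^{SU(n)}_N(\mathcal{K};\xi_{N,a}(s))=J^{SU(n)}_{k_0}(\mathcal{K};\xi_{N,a}(s))$ for a fixed $k_0$, a bounded quantity. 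Your ``remaining sub-cases'' handled by a boundary-maximum argument are vacuous, because $\mathbb{Z}\setminus\{1,\dots,n-1\}=\{a\le 0\}\cup\{a\ge n\}$ leaves nothing over.

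For part (ii) in the only case that gets proved ($4_1$), the paper does not pass to an integral representation and a saddle point: it takes the explicit sum $J^{SU(n)}_{N-1}(4_1)=\sum_j g^{SU(n)}(N,j)$ with $g^{SU(n)}(N,j)=\prod_{k=1}^{j}\frac{[n-2+k]}{[k]}[N-k][N+n-2+k]$, locates the index $k_m$ maximizing $g^{SU(n)}$ near $\frac{5}{6}(N+a-1)$, sandwiches $g^{SU(n)}(N,k_m)\le J^{SU(n)}_{N-1}(4_1)\le N\,g^{SU(n)}(N,k_m)$, and evaluates $\frac{1}{N}\log g^{SU(n)}(N,k_m)$ as a Riemann sum for $\frac{2}{s\pi}\int_0^{5\pi/6}\log(2|\sin t|)\,dt$, finishing with $\Lambda(\frac{5\pi}{6})=-\frac{3}{2}\Lambda(\frac{\pi}{3})$ and $6\Lambda(\frac{\pi}{3})=Vol(S^{3}\setminus 4_{1})$; no gluing equations or Bloch--Wigner identity are needed since $CS(S^{3}\setminus 4_{1})=0$. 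Your sketch is morally the same Laplace argument, but it silently assumes the sum is dominated by its largest term. That is exactly the nontrivial point for $s>1$: the summands $g^{SU(n)}(N,j)$ are not all positive, so the lower bound $\sum_j g^{SU(n)}(N,j)\ge g^{SU(n)}(N,k_m)$ fails a priori, and the paper's Lemma 3.8 is devoted to showing that the finitely many negative terms are $O((N+a-1)^{-2})$ relative to their neighbors and cannot cancel the dominant contribution. Without an argument of this kind your part (ii) is incomplete even for the figure-eight knot.
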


Conjecture 1.4 is a parallel generalization of the complex volume
conjecture for colored Jones polynomial\cite{Kashaev}\cite{MuMu}\cite{MMOTY}. In fact, when $n=2$,
part (i) of the Conjecture 1.4 is an easy consequence of the
congruent relation obtained in \cite{CLPZ}, see Section 2 for a
proof. Moreover,  for general $n\geq 2$, the congruent relation
(1.3) leads to  part (i) of the Conjecture 1.4 for $SU(n)$
invariant $J^{SU(n)}_{N}(\mathcal{K})$ similarly. As for part
(ii) of Conjecture 1.4, we believe it also holds for hyperbolic
links although we have only checked the case of knots. In the paper
\cite{Kaw2}, K. Kawagoe first proposed a special case of the part
(ii) of Conjecture 1.4 for $a=n-1$ and $s=1$.

In Section 3, we will prove that
\begin{theorem}
The Conjecture 1.4 holds for the figure-eight knot $4_1$.
\end{theorem}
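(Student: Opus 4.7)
The plan is to combine the explicit cyclotomic expansion of $J_N^{SU(n)}(4_1;q)$ with an asymptotic analysis at the specified roots of unity. By Example 2.5 one has $H_k^{(n)}(4_1)=1$ for all $k\geq 0$, so Conjecture 1.3 specializes to the clean formula
$$J_N^{SU(n)}(4_1;q)=\sum_{k=0}^{N}\prod_{j=1}^{k}[N-j+1]\,[N+n+j-1].$$
For part (i), suppose $a\in\mathbb{Z}\setminus\{1,\dots,n-1\}$. Set $k=-a$ if $a\leq 0$ and $k=a-n$ if $a\geq n$; in both cases $k\geq 0$ and the divisor $[N-k]$, respectively $[N+k+n]$, appearing in (1.3) equals $[N+a]$, which vanishes at $q=\xi_{N,a}(s)$ since $q^{N+a}=e^{s\pi\sqrt{-1}}=\pm 1$. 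Congruence (1.3) then forces $J_N^{SU(n)}(4_1;\xi_{N,a}(s))=J_k^{SU(n)}(4_1;\xi_{N,a}(s))$, a fixed Laurent polynomial evaluated at points tending to $1$; its logarithm is $O(1)$, giving the vanishing limit (1.5).

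For part (ii), fix $a\in\{1,\dots,n-1\}$ and set $\theta_N=s\pi/(N+a)$. Using $q^{N+a}=e^{s\pi\sqrt{-1}}$ a direct computation gives
$$[N-j+1]\,[N+n+j-1]\big|_{q=\xi_{N,a}(s)}=4\sin\!\bigl((j-1+a)\theta_N\bigr)\sin\!\bigl((j-1+n-a)\theta_N\bigr).$$
Writing $k=\lfloor xN\rfloor$ and $j=\lfloor yN\rfloor$, the shifts by $a$ and $n-a$ contribute only $O(1/N)$ inside the sine arguments, so the logarithm of the $k$-th summand $P_k(N)$ converges to the Riemann integral
$$g(x):=\int_0^x\log\!\bigl(4\sin^2(s\pi y)\bigr)\,dy=-\frac{\mathrm{Cl}_2(2\pi sx)}{\pi s},$$
where $\mathrm{Cl}_2$ is the Clausen function. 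A Laplace/saddle-point argument applied to $\sum_{k=0}^{N}P_k(N)$ selects the critical point $x_*$ satisfying $\sin^2(s\pi x_*)=\tfrac14$ at which $\mathrm{Cl}_2(2\pi s x_*)$ attains its minimum $-v_3$, where $v_3:=\mathrm{Im}\,\mathrm{Li}_2(e^{\pi\sqrt{-1}/3})$ is the volume of the ideal regular hyperbolic tetrahedron. Consequently $\max g=v_3/(\pi s)$, so
$$2\pi s\cdot\lim_{N\to\infty}\frac{\log J_N^{SU(n)}(4_1;\xi_{N,a}(s))}{N+1}=2v_3=\mathrm{Vol}(S^3\setminus 4_1),$$
matching Conjecture 1.4(ii) with $\mathrm{CS}(S^3\setminus 4_1)=0$ (the figure-eight is amphichiral).

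The main obstacle is the rigorous Laplace analysis: one must verify that no destructive phase cancellation occurs among the summands, that the dominant asymptotic is concentrated in a shrinking neighborhood of $k_*=\lfloor x_*N\rfloor$, and that the imaginary part of $\log J_N^{SU(n)}(4_1;\xi_{N,a}(s))/(N+1)$ vanishes. For $(n,a,s)=(2,1,1)$ this is the Kashaev--H.\,Murakami--J.\,Murakami theorem for $4_1$, and the extension to all admissible $(n,a,s)$ is uniform since $n$ and $a$ enter only through $O(1/N)$ corrections to the sine arguments, so the leading asymptotic is controlled entirely by $s$ through a single Clausen-function evaluation.
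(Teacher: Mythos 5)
Your strategy---the explicit IMMM/cyclotomic sum for $4_1$, reduction of part (i) to the congruent relation, and a largest-term (Laplace) analysis plus a Lobachevsky/Clausen evaluation for part (ii)---is essentially the paper's strategy, and your part (i) argument matches the paper's (which cites the proven congruent relation for $4_1$). But there are two genuine problems in part (ii). First, you misquote Example 2.5: the paper gives $H_k^{(n)}(4_1)=\frac{[n-2+k]!}{[k]!\,[n-2]!}$, which equals $1$ only when $n=2$. Your ``clean formula'' therefore omits the factor $\prod_{j=1}^{k}\frac{[n-2+j]}{[j]}$ from each summand. That factor is only polynomially large in $N$, so it does not alter the exponential growth rate, but it is exactly where the analytic difficulties live: its denominators $[j]$ nearly vanish (and the factors change sign) when $js\pi/(N+a)$ crosses multiples of $\pi$, so it cannot simply be dropped from the analysis.

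Second, and decisively, the step you defer as ``the main obstacle''---that no cancellation among summands spoils the two-sided bound $\max_k P_k \le |J| \le N\max_k P_k$---is not a technicality but the actual content of the theorem for general $s$ and general $a\in\{1,\dots,n-1\}$. For $s\ge 2$ the summands $g^{SU(n)}(N,j)$ genuinely change sign as $j$ crosses the values near $i(N+a-1)/s$, so the sum is not termwise positive and the naive bound fails without further argument. The paper's proof consists precisely of closing this gap: Lemma 3.7 locates the maximizing index $k_m^{(p)}$ in an explicit window $\lfloloor$-free interval of width $2(n-2)$, and Lemma 3.8 groups the finitely many possibly-negative terms with the positive term preceding them and shows each group is positive because the offending factors are $O\bigl((N+a)^{-2(l+1)}\bigr)$. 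Your proposal asserts the conclusion of this analysis without supplying it; likewise, your appeal to amphichirality addresses the right-hand side of (1.6) rather than showing that the imaginary part of the limit you compute actually vanishes. As written, the proposal is a correct outline with its hardest step left open.

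\begingroup
\renewcommand{\lfloloor}{}
\endgroup
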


We note that in the $SU(n)$ invariants $J_{N}^{SU(n)}$, there are two
variables $N,n$.  So it is natural to consider the double limits
$N,n\rightarrow \infty$.  In \cite{Kaw2}, K. Kawagoe has studied the
double limit of
\begin{align}
2\pi  \lim_{N\rightarrow \infty} \frac{\log
J_{N}^{SU(n)}(\mathcal{K};\xi_{N,n-1}(1)) }{N+1}
\end{align}
when $\frac{n-1}{N+n-1}$ keeps a different ratio.  In this paper, we
find certain double limit of the $SU(n)$ invariants
$J_{N}^{SU(n)}$ will also converge to zero or to the volume of the
hyperbolic knot complement. More precisely, we propose

\begin{conjecture}
Fix an integer $n\geq 2$,

(i) If $a\in \mathbb{Z}\backslash \{1,2,..,n-1\}$, then for any knot
$\mathcal{K}$:
\begin{align}
2\pi s\lim_{N\rightarrow \infty}\frac{\log
J_{N}^{SU(N+a+n)}(\mathcal{K};\xi_{N,a}(s))}{N+1}=0.
\end{align}
(ii) If $a\in \{1,2,...,n-1\}$, then
\begin{align}
2\pi s \lim_{N\rightarrow \infty} \frac{\log
J_{N}^{SU(N+a+n)}(\mathcal{K};\xi_{N,a}(s))
}{N+1}=Vol(S^3/\mathcal{K})+\sqrt{-1}CS(S^3/\mathcal{K})
\end{align}
for any hyperbolic knot $\mathcal{K}$.
\end{conjecture}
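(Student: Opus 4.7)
The plan is to derive Conjecture 1.6 from Conjecture 1.4 by a single application of Theorem 1.1, exploiting the fact that at $q=\xi_{N,a}(s)$ the quantum integer $[N+a]$ vanishes. First, fix a knot $\mathcal{K}$, integers $n\geq 2$ and $a,s\in\mathbb{Z}$, and take $N$ large enough that $N+a\geq 2$. Writing the rank parameter in Theorem 1.1 as $\tilde n$ to avoid clashing with the $n$ of Conjecture 1.6, I would apply that theorem with $\tilde n=N+a+n$ and $m=N+a$ (the hypotheses $\tilde n\geq m\geq 2$ are then clear) to obtain the rank-congruence
\begin{align*}
J_{N}^{SU(N+a+n)}(\mathcal{K};q)\equiv J_{N}^{SU(n)}(\mathcal{K};q) \mod [N+a].
\end{align*}

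Next I would evaluate at $q=\xi_{N,a}(s)=\exp\!\bigl(\tfrac{s\pi\sqrt{-1}}{N+a}\bigr)$, where $q^{N+a}=e^{s\pi\sqrt{-1}}=(-1)^s$ and hence $[N+a]=(-1)^s-(-1)^s=0$. Since the congruence above expresses the difference $J_{N}^{SU(N+a+n)}(\mathcal{K};q)-J_{N}^{SU(n)}(\mathcal{K};q)$ as $[N+a]\,f(q)$ with $f\in\mathbb{Z}[q,q^{-1}]$, substituting the root forces the right-hand side to zero and the congruence collapses to the exact identity
\begin{align*}
J_{N}^{SU(N+a+n)}(\mathcal{K};\xi_{N,a}(s))=J_{N}^{SU(n)}(\mathcal{K};\xi_{N,a}(s))
\end{align*}
valid for all sufficiently large $N$.

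Consequently the limit $2\pi s\lim_{N\rightarrow\infty}\tfrac{\log J_{N}^{SU(N+a+n)}(\mathcal{K};\xi_{N,a}(s))}{N+1}$ equals $2\pi s\lim_{N\rightarrow\infty}\tfrac{\log J_{N}^{SU(n)}(\mathcal{K};\xi_{N,a}(s))}{N+1}$, which is precisely the quantity controlled by Conjecture 1.4. Thus part (i) (vanishing when $a\notin\{1,\ldots,n-1\}$) and part (ii) (the complex volume when $a\in\{1,\ldots,n-1\}$) of Conjecture 1.6 transfer verbatim from the corresponding parts of Conjecture 1.4. In particular, combining this reduction with Theorem 1.5 immediately proves Conjecture 1.6 for the figure-eight knot $4_1$, with no additional asymptotic analysis required.

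The main obstacle is therefore not the reduction itself but the ingredient it relies on: Conjecture 1.4 is currently established only for a small number of explicit knots. A uniform proof for all hyperbolic knots would presumably require either the cyclotomic expansion Conjecture 1.3 together with a careful saddle-point analysis of the basis functions $C_{N+1,k}^{(n)}$ at $\xi_{N,a}(s)$, or a direct $R$-matrix and quantum dilogarithm argument in the spirit of the Kashaev--Murakami--Murakami proof for the colored Jones polynomial. Either route is substantially harder than the rank-shift reduction above and constitutes the genuine mathematical content of the double-limit conjecture.
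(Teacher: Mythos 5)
Your reduction is exactly the paper's intended argument for Theorem 1.7: apply Theorem 1.1 with rank $N+a+n$ and $m=N+a$, note that $[N+a]$ vanishes at $q=\xi_{N,a}(s)$ so the congruence becomes the exact equality $J_{N}^{SU(N+a+n)}(\mathcal{K};\xi_{N,a}(s))=J_{N}^{SU(n)}(\mathcal{K};\xi_{N,a}(s))$, and thereby transfer Conjecture 1.6 to Conjecture 1.4 (which, as you correctly note, remains the open content and is verified in the paper only for $4_1$). This matches the paper's approach essentially verbatim.
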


As an application of Theorem 1.1, it is direct to prove
\begin{theorem}
Conjecture 1.6 is equivalent to Conjecture 1.4.
\end{theorem}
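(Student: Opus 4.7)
The plan is to deduce Theorem 1.7 from Theorem 1.1 by a single direct substitution: the congruence supplied by Theorem 1.1 will force $J_N^{SU(N+a+n)}$ and $J_N^{SU(n)}$ to be literally equal at the relevant root of unity, so the two families of limits in Conjectures 1.4 and 1.6 are in fact the same numerical limits.

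First I would apply Theorem 1.1 with the rank parameter taken to be $N+a+n$ and the difference taken to be $m = N+a$, so that $(N+a+n)-(N+a)=n$. The hypothesis $n' \geq m' \geq 2$ of Theorem 1.1 reduces to $N+a \geq 2$, which is automatic once $N$ is large (for any fixed $a \in \mathbb{Z}$). This gives
\begin{align*}
J_N^{SU(N+a+n)}(\mathcal{K};q) \equiv J_N^{SU(n)}(\mathcal{K};q) \pmod{[N+a]},
\end{align*}
i.e.\ by Remark 1.2 the difference $J_N^{SU(N+a+n)}(\mathcal{K};q) - J_N^{SU(n)}(\mathcal{K};q)$ lies in $[N+a]\cdot \mathbb{Z}[q,q^{-1}]$.

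Next I would observe that the specialization $q = \xi_{N,a}(s) = \exp(s\pi \sqrt{-1}/(N+a))$ used in both conjectures is precisely a zero of $[N+a] = q^{N+a}-q^{-(N+a)}$: at this value $q^{N+a} = e^{s\pi \sqrt{-1}} = (-1)^s = q^{-(N+a)}$. Consequently the two Laurent polynomials evaluate to the same complex number,
\begin{align*}
J_N^{SU(N+a+n)}(\mathcal{K};\xi_{N,a}(s)) = J_N^{SU(n)}(\mathcal{K};\xi_{N,a}(s)),
\end{align*}
for every $N$ with $N+a\geq 2$.

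Substituting this pointwise equality into the left-hand sides of (1.8)--(1.9) identifies them term by term with the left-hand sides of (1.5)--(1.6). Hence the two limits in Conjecture 1.6 exist and take a given value if and only if the corresponding limits in Conjecture 1.4 do, in both parts (i) and (ii); the equivalence follows in both directions. There is no real obstacle beyond Theorem 1.1 itself: the argument is a one-line substitution, and the only bookkeeping is the harmless tail condition $N+a\geq 2$, which holds for all sufficiently large $N$.
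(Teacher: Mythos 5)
Your proof is correct and is exactly the paper's intended argument: the paper offers no written proof of Theorem 1.7 beyond the remark that it is a direct application of Theorem 1.1, and that application is precisely your substitution $m=N+a$, giving $J_N^{SU(N+a+n)}(\mathcal{K};\xi_{N,a}(s))=J_N^{SU(n)}(\mathcal{K};\xi_{N,a}(s))$ for all $N$ with $N+a\geq 2$, so the limits in (1.8)--(1.9) and (1.5)--(1.6) coincide term by term. The only cosmetic difference is that you recover this pointwise equality by evaluating the congruence at a zero of $[N+a]$, whereas the paper's own proof of Theorem 1.1 (equations (2.5)--(2.6)) establishes the pointwise equality at $q=e^{s\pi\sqrt{-1}/m}$ first and deduces the congruence from it.
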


{\bf Acknowledgements.}  We would like to thank K. Kawagoe for
sending his paper \cite{Kaw1} to our attention. We would like to thank Nicolai Reshetikhin, Rinat Kashaev, Giovanni Felder, Jun Murakami, and Tian Yang for their interests and helpful discussion. The research of K. Liu is supported by an NSF grant. The research of S. Zhu is supported by the National Science Foundation of China grant
No. 11201417 and the China Postdoctoral Science special Foundation No. 2013T60583.

\section{Symmetry, Congruent relations and cyclotomic expansions}
\subsection{Definitions}
In this subsection, we first review the definition of the colored
HOMFLYPT invariant of a  link $\mathcal{L}$ with $L$ components
$\mathcal{K}_1,...,\mathcal{K}_L$ (See Section 2 of \cite{CLPZ} for
detail).

Let $(\mathcal{L}_+,\mathcal{L}_-,\mathcal{L}_0)$ be a Conway
triple. The (unreduced framed) HOMFLYPT polynomial
$\mathcal{H}(\mathcal{L};q,t)$ of $\mathcal{L}$ is determined by the
following skein relation:
\begin{align*}
t\mathcal{H}(\mathcal{L}_+;q,t)-t^{-1}\mathcal{H}(\mathcal{L}_-;q,t)=(q-q^{-1})\mathcal{H}(\mathcal{L}_0;q,t)
\end{align*}
with $\mathcal{H}(U;q,t)=\frac{t-t^{-1}}{q-q^{-1}}$, where $U$
denotes an unknot.

The colored HOMFLYPT invariant can be defined by satellite knot. A
satellite of a knot $\mathcal{K}$ is  $\mathcal{K}\star Q$ which
means the knot $\mathcal{K}$ decorated by a diagram $Q$ in the
annulus. (see Figure 1 for a framed trefoil $\mathcal{K}$ decorated
by skein element $Q$).
\begin{figure}[!htb]
\begin{align*}
\mathcal{K} \qquad\qquad\qquad\quad \mathcal{Q}
\qquad\qquad\qquad\quad
\mathcal{K}\star \mathcal{Q}\\
\includegraphics[width=50 pt]{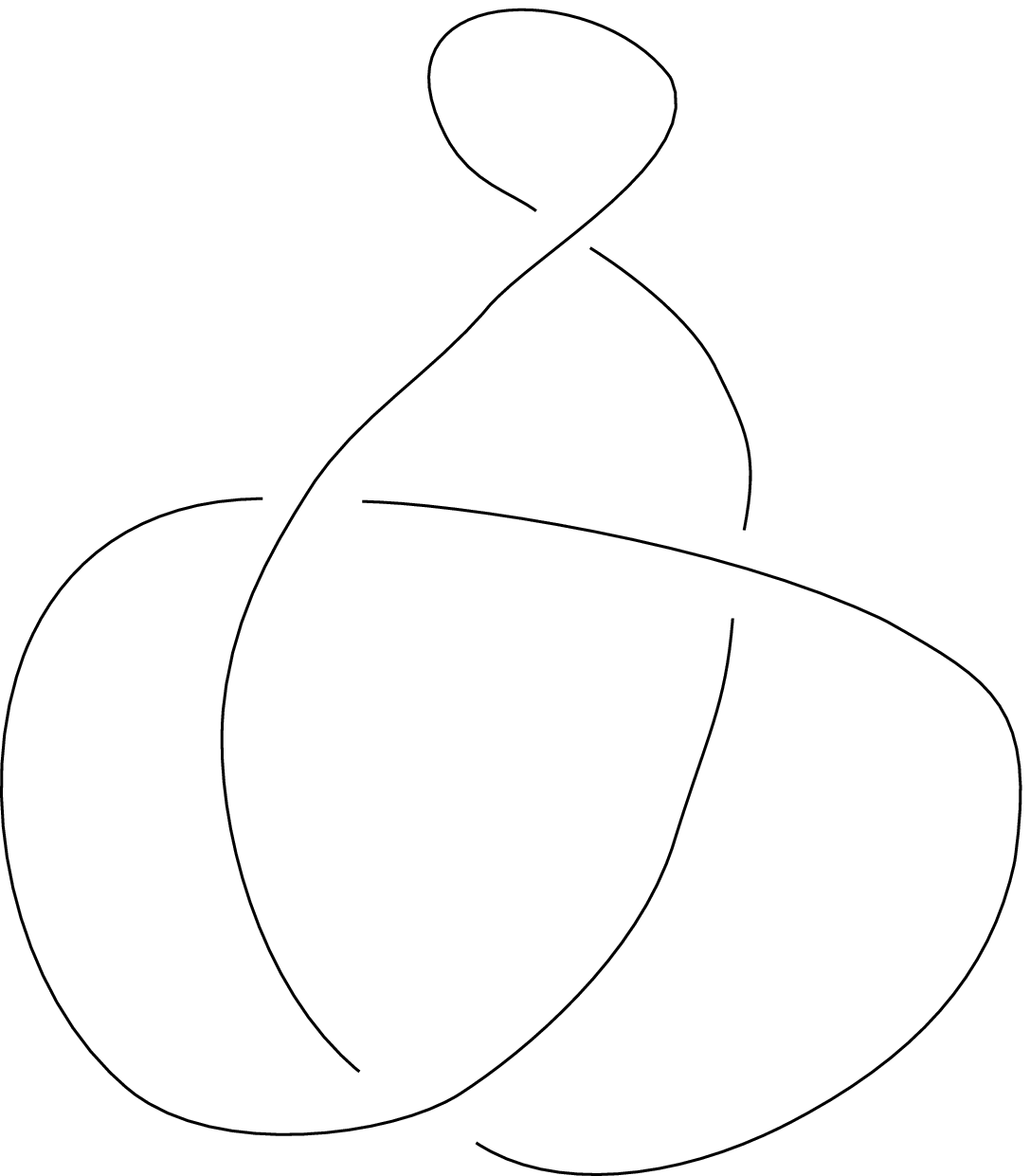}\qquad\qquad \includegraphics[width=50
pt]{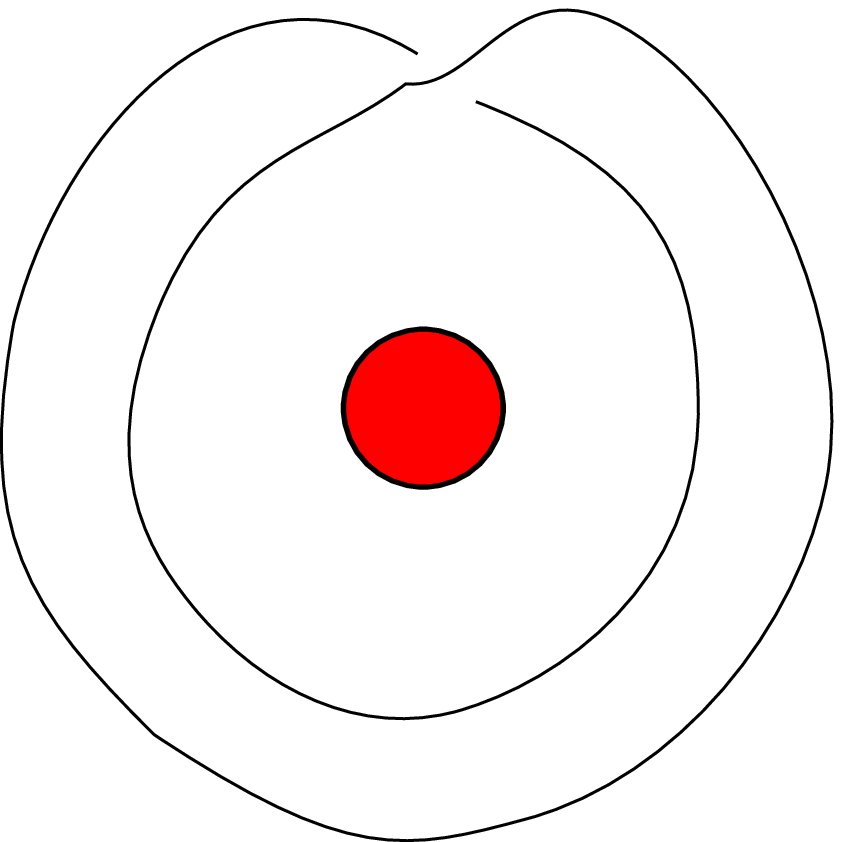} \qquad\quad \includegraphics[width=50
pt]{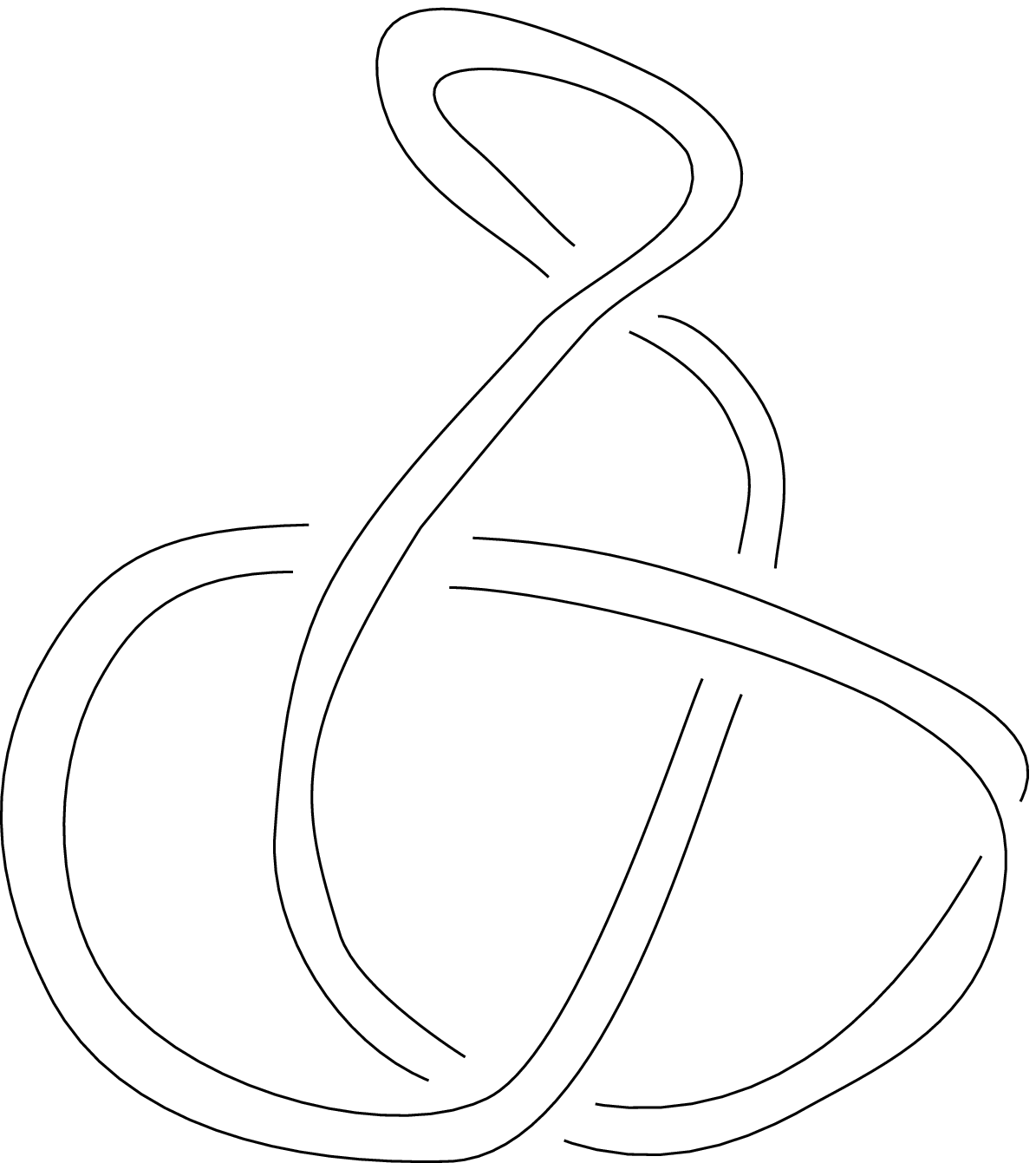}
\end{align*}
\caption{}
\end{figure}

There is a well-known set of idempotents, $E_{\lambda}$, for a
partition $\lambda$  of a positive integer $n$. The closure of
$E_{\lambda}$, denoted by $Q_{\lambda}$ forms a basis of the skein
of annulus $\mathcal{C}$ \cite{MA}. Given $L$ partitions
$\lambda^1,...,\lambda^L$, let
$\vec{\lambda}=(\lambda^1,...,\lambda^L)$, the colored HOMFLYPT
invariant of $\mathcal{L}$ (with color $\vec{\lambda}$) is defined
as
\begin{align*}
W_{\vec{\lambda}}(\mathcal{L};q,t)=q^{-\sum_{\alpha=1}^{L}\kappa_{\lambda^%
\alpha}w(\mathcal{K}_\alpha)} t^{-\sum_{\alpha=1}^L|\lambda^\alpha|w(%
\mathcal{K}_\alpha)} \mathcal{H}(\mathcal{L}\star
\otimes_{\alpha=1}^LQ_{\lambda^\alpha};q,t).
\end{align*}
where $w(\mathcal{K}_{\alpha})$ is the writhe number of knot
$\mathcal{K}_{\lambda^\alpha}$, and
$\kappa_{\alpha}=\sum_{i}\lambda^{\alpha}_i(\lambda^{\alpha}_i-2i+1)$,
$|\lambda^{\alpha}|=\sum_i\lambda^{\alpha}_{i}.$

As to the unknot $U$, its colored HOMFLYPT invariant( with color
$\lambda$) is given by
\begin{align*}
W_{\lambda}(U;q,t)=S_{\lambda}(q,t)=\sum_{\mu}\frac{\chi_{\lambda}(\mu)}{z_\mu}\prod_{j=1}
\frac{t^{\mu_j}-t^{-\mu_j}}{q^{\mu_j}-q^{-\mu_j}}.
\end{align*}
where $z_{\mu}=\prod_{j}\mu_j|Aut(\mu)|$ and $\chi_{\lambda}(\mu)$
is the character symmetric group.

\subsection{Symmetry}
In \cite{CLPZ}, we proved several symmetries for colored HOMFLYPT
invariants (see Theorem 1.2 in \cite{CLPZ}), and one of them is
\begin{align}
W_{\vec{\lambda}}(\mathcal{L};q,-t)&=(-1)^{\|\vec{\lambda}\|}W_{\vec{\lambda}}(\mathcal{L};q,t).
\end{align}
for a given link $\mathcal{L}$ with $L$ components, and
$\vec{\lambda}=(\lambda^1,..,\lambda^L)\in \mathcal{P}^L$,

As to a knot $\mathcal{K}$, we define the reduced colored HOMFLYPT
invariant with symmetric representation corresponding to a partition
$(N)$ as follow
\begin{align}
P_{N}(\mathcal{K};q,t)=\frac{W_{(N)}(\mathcal{K};q,t)}{S_{(N)}(q,t)}.
\end{align}
So by (2.1), we have
\begin{align}
P_{N}(\mathcal{K};q,t)=P_{N}(\mathcal{K};q,-t).
\end{align}
Then, by the definition (1.1)
\begin{align}
J_{N}^{SU(n)}(\mathcal{K};q)=P_{N}(\mathcal{K};q,q^n)=P_{N}(\mathcal{K};q,-q^n).
\end{align}

Now we let $q=e^{\frac{s\pi \sqrt{-1}}{m}}$, where $s\in
\mathbb{Z}$.
 If
$s$ odd,  then
\begin{align}
J_{N}^{SU(n)}(\mathcal{K};e^{\frac{s\pi
\sqrt{-1}}{m}})&=P_{N}(\mathcal{K};e^{\frac{s\pi
\sqrt{-1}}{m}},e^{\frac{ns\pi \sqrt{-1}}{m}})\\\nonumber
&=P_{N}(\mathcal{K};e^{\frac{s\pi \sqrt{-1}}{m}},-e^{\frac{(n-m)s\pi
\sqrt{-1}}{m}})\\\nonumber &=P_{N}(\mathcal{K};e^{\frac{s\pi
\sqrt{-1}}{m}},e^{\frac{(n-m)s\pi \sqrt{-1}}{m}})\\\nonumber
&=J_{N}^{SU(n-m)}(\mathcal{K};e^{\frac{s\pi \sqrt{-1}}{m}}),
\end{align}
and if $s$ even, it is obvious
\begin{align}
J_{N}^{SU(n)}(\mathcal{K};e^{\frac{s\pi
\sqrt{-1}}{m}})=J_{N}^{SU(n-m)}(\mathcal{K};e^{\frac{s\pi
\sqrt{-1}}{m}}).
\end{align}

In conclusion, we obtain
\begin{theorem}[=Theorem 1.1]
For a knot $\mathcal{K}$, $n\geq m\geq 2$, we have
\begin{align}
J_{N}^{SU(n)}(\mathcal{K};q)\equiv J_{N}^{SU(n-m)}(\mathcal{K};q)
\mod [m].
\end{align}
\end{theorem}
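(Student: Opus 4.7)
The plan is to reduce the congruence modulo $[m]$ to a pointwise equality at $2m$-th roots of unity. Since $[m] = q^m - q^{-m} = q^{-m}(q^{2m}-1)$, a Laurent polynomial in $\mathbb{Z}[q,q^{-1}]$ is divisible by $[m]$ precisely when it vanishes at every $2m$-th root of unity. Thus it suffices to check
\[
J_N^{SU(n)}(\mathcal{K}; e^{s\pi\sqrt{-1}/m}) = J_N^{SU(n-m)}(\mathcal{K}; e^{s\pi\sqrt{-1}/m})
\]
for every integer $s$, and then verify that the resulting quotient has integer Laurent coefficients.

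The supply of vanishing will come from the symmetry (2.1) of the colored HOMFLYPT invariant under $t \mapsto -t$. First I would pass to the reduced invariant $P_N(\mathcal{K}; q, t) = W_{(N)}(\mathcal{K}; q, t)/S_{(N)}(q, t)$ and observe that both numerator and denominator acquire the sign $(-1)^N = (-1)^{\|(N)\|}$ under $t \mapsto -t$. For $W_{(N)}$ this is exactly (2.1); for $S_{(N)}(q,t)$ it follows by direct inspection of the defining sum, since each factor $t^{\mu_j}-t^{-\mu_j}$ contributes $(-1)^{\mu_j}$ and $\sum_j \mu_j = N$. Hence $P_N(\mathcal{K}; q, t) = P_N(\mathcal{K}; q, -t)$, which via definition (1.1) gives the key identity
\[
J_N^{SU(n)}(\mathcal{K}; q) = P_N(\mathcal{K}; q, q^n) = P_N(\mathcal{K}; q, -q^n).
\]

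The core step is then a case split on the parity of $s$ at $q = e^{s\pi\sqrt{-1}/m}$, where $q^m = (-1)^s$. If $s$ is even, $q^m = 1$ so $q^{n-m} = q^n$ and the equality is immediate. If $s$ is odd, $q^m = -1$ so $q^{n-m} = -q^n$, and the identity above delivers $P_N(\mathcal{K}; q, -q^n) = P_N(\mathcal{K}; q, q^{n-m}) = J_N^{SU(n-m)}(\mathcal{K}; q)$. Combining the two cases, $J_N^{SU(n)} - J_N^{SU(n-m)}$ vanishes at every $2m$-th root of unity. The only step where I expect to have to be slightly careful is the integrality of the quotient $(J_N^{SU(n)} - J_N^{SU(n-m)})/[m]$; this should follow from the factorization $q^{2m}-1 = \prod_{d\mid 2m} \Phi_d(q)$ into monic cyclotomic polynomials over $\mathbb{Z}[q]$ together with Gauss's lemma, once the two $SU(n)$-invariants are known to be genuine elements of $\mathbb{Z}[q,q^{-1}]$. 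This bookkeeping, rather than the symmetry argument itself, is the only mild obstacle I anticipate.
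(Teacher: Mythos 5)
Your proposal is correct and follows essentially the same route as the paper: the symmetry $W_{\vec{\lambda}}(\mathcal{L};q,-t)=(-1)^{\|\vec{\lambda}\|}W_{\vec{\lambda}}(\mathcal{L};q,t)$ gives $P_N(\mathcal{K};q,t)=P_N(\mathcal{K};q,-t)$, and the parity split on $s$ at $q=e^{s\pi\sqrt{-1}/m}$ is exactly the paper's computation in (2.5)--(2.6). Your explicit justification of the final divisibility step (vanishing at all $2m$-th roots of unity plus Gauss's lemma, granting integrality of the invariants) is a detail the paper leaves implicit, but the argument is the same.
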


\subsection{Congruent relations and cyclotomic expansions}
At first, we review some results obtained in our previous joint work
with P. Peng \cite{CLPZ}. As to the colored Jones polynomial, we
have proved the following congruent relations for
$J_N(\mathcal{L};q)$. See Theorem 7.3 in  \cite{CLPZ}:
\begin{theorem}
For any knot $\mathcal{K}$, and $N\geq k\geq 0$, we have
\begin{align}
J_{N}(\mathcal{K};q)-J_k(\mathcal{K};q)\equiv 0 \mod [N-k][N+k+2].
\end{align}
\end{theorem}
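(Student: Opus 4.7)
The plan is to deduce Theorem 2.2 from Habiro's cyclotomic expansion theorem for the colored Jones polynomial, which is precisely the $n=2$ case of Conjecture 1.3 and is known to hold with integral coefficients. Granting this, one writes
\begin{align*}
J_N(\mathcal{K};q) = \sum_{j=0}^{N} C_{N+1,j}^{(2)}\, H_j(\mathcal{K}), \qquad C_{N+1,j}^{(2)} = \prod_{i=0}^{j-1}[N-i][N+i+2],
\end{align*}
with $H_j(\mathcal{K}) \in \bZ[q,q^{-1}]$, and similarly for $J_k$. The natural first move is then to split the difference into two pieces,
\begin{align*}
J_N(\mathcal{K};q) - J_k(\mathcal{K};q) = \sum_{j=0}^{k}\bigl(C_{N+1,j}^{(2)} - C_{k+1,j}^{(2)}\bigr) H_j(\mathcal{K}) + \sum_{j=k+1}^{N} C_{N+1,j}^{(2)}\, H_j(\mathcal{K}),
\end{align*}
and show separately that both pieces are divisible by $[N-k][N+k+2]$.

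The tail sum is immediate: for $j\geq k+1$, the factor in $C_{N+1,j}^{(2)}$ indexed by $i=k$ is exactly $[N-k][N+k+2]$, so every term is visibly divisible by this quantity. For the initial sum, the key trick is a change of variable. Setting $y = q^{2N+2}+q^{-2N-2}$, a short expansion gives
\begin{align*}
[N-i][N+i+2] = y - \bigl(q^{2i+2}+q^{-2i-2}\bigr),
\end{align*}
so $C_{N+1,j}^{(2)}$ can be viewed as a single polynomial $P_j(y) := \prod_{i=0}^{j-1}\bigl(y - q^{2i+2}-q^{-2i-2}\bigr)$ in the variable $y$. Evaluating this same polynomial at $y_0 := q^{2k+2}+q^{-2k-2}$ returns $C_{k+1,j}^{(2)}$, so the difference $C_{N+1,j}^{(2)} - C_{k+1,j}^{(2)} = P_j(y) - P_j(y_0)$ is divisible in $\bZ[q,q^{-1}][y]$ by $y-y_0$. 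A direct verification shows
\begin{align*}
y - y_0 = q^{2N+2} + q^{-2N-2} - q^{2k+2} - q^{-2k-2} = [N-k][N+k+2],
\end{align*}
which is precisely the divisor required. Combining the two parts gives the congruence.

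The only non-elementary input is Habiro's integrality statement for the coefficients $H_j(\mathcal{K})$; without it the congruence would not even make sense modulo $[N-k][N+k+2]$ in $\bZ[q,q^{-1}]$. Everything after that is a routine polynomial manipulation. The main obstacle, then, is conceptual rather than technical: one must notice that the quantum integer products appearing in Habiro's expansion interpolate cleanly through the single variable $y = q^{2N+2}+q^{-2N-2}$, so that varying the color $N$ is the same as evaluating a fixed polynomial in $y$. Once that observation is made, the divisibility falls out of the trivial fact that $f(y)-f(y_0)$ is divisible by $y-y_0$.
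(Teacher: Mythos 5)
Your proof is correct and follows essentially the same route as the paper: Habiro's cyclotomic expansion, the split of $J_N-J_k$ into the tail $j>k$ (where $C_{N+1,j}$ visibly contains the factor $[N-k][N+k+2]$) and the range $j\leq k$, where $C_{N+1,j}-C_{k+1,j}$ is shown to be divisible by $[N-k][N+k+2]$. Your factor-theorem argument in the variable $y=q^{2N+2}+q^{-2N-2}$ is just a repackaging of the paper's telescoping identities (2.14) (note that the paper's definition (2.9) already writes $C_{N,k}$ as a polynomial in $q^{2N}+q^{-2N}$), so the two arguments coincide.
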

In fact, it is a consequence of the cyclotomic expansion for the
colored Jones polynomial due to K. Habiro \cite{Hab}. For $N\geq 1$,
we define
\begin{align}
C_{N,k}=\prod_{j=1}^{k}(q^{2N}+q^{-2N}-q^{2j}-q^{-2j}).
\end{align}
In particular, $C_{N+1,0}=1$, $C_{N+1,N+1}=0$, and
\begin{align*}
C_{N+1,1}&=[N+2][N], \\
C_{N+1,2}&=[N+3][N+2][N][N-1], \\
\cdots \\
C_{N+1,k}&=[N+k+1][N+k]\cdots [N+2][N][N-1]\cdots [N-k+1] \\
\cdots \\
C_{N+1,N}&=[2N+1][2N]\cdots[N+2][N][N-1]\cdots [1].
\end{align*}
In our notation, Habiro's cyclotomic expansion of colored Jones
polynomial states:
\begin{theorem}[K. Habiro \cite{Hab}]
For any knot $\mathcal{K}$, there exist $H_k(\mathcal{K})\in
\mathbb{Z}[q,q^{-1}]$, independent of $N$ ($N\geq 0$). Such that
\begin{align}
J_{N}(\mathcal{K};q)=\sum_{k=0}^{N}C_{N+1,k}H_k(\mathcal{K}).
\end{align}
In particular, $J_0(\mathcal{K};q)=H_0(\mathcal{K})=1$.
\end{theorem}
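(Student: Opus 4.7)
The plan is to reproduce Habiro's original argument, which proceeds through the universal $\mathfrak{sl}_2$ quantum invariant. First I would present the knot $\mathcal{K}$ as the closure of a $1$-component bottom tangle and assign to it a universal invariant $J_{\mathcal{K}}^{\mathrm{univ}}$ taking values in a suitable completion of the integral form of the center $Z(U_q(\mathfrak{sl}_2))$, by decorating the crossings with the universal $R$-matrix and the maxima/minima with the ribbon element. The defining property is that for every $N \geq 0$, evaluation of $J_{\mathcal{K}}^{\mathrm{univ}}$ on the irreducible representation $V_{N+1}$ of dimension $N+1$ returns $J_N(\mathcal{K};q)$ up to a normalization that is trivial for the unreduced-to-reduced convention used in the excerpt.

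Second, I would exhibit a distinguished topological basis $\{\sigma_k\}_{k\ge 0}$ of the relevant completion, given (up to a suitable normalization of a Casimir-like central element $C$) by $\sigma_k = \prod_{j=1}^{k} (C - q^{2j} - q^{-2j})$. A direct calculation shows that $C$ acts on $V_{N+1}$ as multiplication by $q^{2N} + q^{-2N}$, so $\sigma_k$ acts on $V_{N+1}$ by the scalar $C_{N+1,k}$ of formula (2.9) and vanishes on every $V_{k'+1}$ with $k' < k$. This triangularity shows that any element of the completion admits a unique expansion $\sum_{k \ge 0} H_k \sigma_k$. Applying this expansion to $J_{\mathcal{K}}^{\mathrm{univ}}$ and then evaluating on $V_{N+1}$ collapses the sum to $k \le N$ and yields formula (2.11), with coefficients $H_k(\mathcal{K})$ manifestly independent of $N$. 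Setting $N=0$ immediately gives $H_0(\mathcal{K}) = J_0(\mathcal{K};q) = 1$.

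Third, and this is the technical heart, one must show $H_k(\mathcal{K}) \in \mathbb{Z}[q,q^{-1}]$. Habiro establishes this by proving that $J_{\mathcal{K}}^{\mathrm{univ}}$ actually lies in the Habiro ring
\[
\widehat{\mathbb{Z}[q]} \;=\; \varprojlim_{n} \mathbb{Z}[q,q^{-1}]/\bigl((q;q)_n\bigr),
\]
inside which $\{\sigma_k\}$ is a topological $\mathbb{Z}[q,q^{-1}]$-basis. This containment is extracted from the integrality of the $R$-matrix in the restricted integral form of $U_q(\mathfrak{sl}_2)$, combined with a careful analysis of the partial trace that closes a bottom tangle: the ``bad'' denominators produced by individual $R$-matrix entries cancel after one takes the central projection, so that every finite truncation of $J_{\mathcal{K}}^{\mathrm{univ}}$ modulo $(q;q)_n$ lies in $\mathbb{Z}[q,q^{-1}]/((q;q)_n)$.

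The main obstacle is precisely this third step, the Habiro-ring containment; the first two steps are essentially formal bookkeeping once the framework is in place. Overcoming it requires Habiro's subtle factorization of the $R$-matrix into summands whose partial evaluations manifestly land in the correct cyclotomic quotients, together with the observation that the ribbon element and the pivotal structure preserve this integrality. Once this is established, expressing $J_{\mathcal{K}}^{\mathrm{univ}}$ in the basis $\{\sigma_k\}$ forces each $H_k(\mathcal{K})$ to lie in $\mathbb{Z}[q,q^{-1}]$, completing the proof.
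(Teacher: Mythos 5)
The paper does not prove this statement at all: Theorem 2.3 is imported verbatim from Habiro's paper \cite{Hab} and is used as a black box (its only role here is to derive the congruence (2.8) and the vanishing limit in Theorem 3.2). So there is no in-paper proof to compare against; what you have written is a roadmap of Habiro's own argument. As such a roadmap it is essentially accurate: the universal $\mathfrak{sl}_2$ invariant of a bottom tangle, the central elements $\sigma_k=\prod_{j=1}^{k}(C-q^{2j}-q^{-2j})$, the triangularity of their action on the irreducibles, and the integrality coming from a completion with topological $\mathbb{Z}[q,q^{-1}]$-basis $\{\sigma_k\}$ are indeed the ingredients of Habiro's proof.

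Two caveats. First, there is an indexing slip: with the paper's conventions $C_{N+1,k}=\prod_{j=1}^{k}\bigl(q^{2(N+1)}+q^{-2(N+1)}-q^{2j}-q^{-2j}\bigr)=[N+1+k]\cdots[N+2]\,[N]\cdots[N-k+1]$, so the Casimir-type element must act on $V_{N+1}$ by $q^{2(N+1)}+q^{-2(N+1)}$, not by $q^{2N}+q^{-2N}$ as you state; with your normalization $\sigma_k$ would also vanish on $V_{k+1}$ and the expansion would truncate one step too early. Second, and more importantly, the proposal is not a proof but a pointer to one: step three, the containment of the universal invariant in the relevant completion (equivalently, the integrality $H_k(\mathcal{K})\in\mathbb{Z}[q,q^{-1}]$ rather than merely $H_k$ lying in some localization), is the entire content of Habiro's theorem, and you dispose of it by appealing to ``Habiro's subtle factorization of the $R$-matrix'' without exhibiting it. If the intent is to cite Habiro, that is exactly what the paper does and nothing more is needed; if the intent is to supply a proof, the argument as written has its essential step missing.
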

In the next section, we will show  that, by using Theorem 2.2, we
can prove  part (i) of Conjecture 1.4 for $n=2$ case.

Now we consider the general $SU(n)$ invariants
$J_N^{SU(n)}(\mathcal{K};q)$. After some numerical computations, it
is natural to propose the following conjecture which is the
generalization of cyclotomic expansion (2.10) for colored Jones
polynomial.

\begin{conjecture}[=Conjecture 1.3] For any knot $\mathcal{K}$, there exist
$H_k^{(n)}(\mathcal{K})\in \mathbb{Z}[q,q^{-1}]$, independent of $N$
($N\geq 0$). Such that
\begin{align}
J_{N}^{SU(n)}(\mathcal{K};q)=\sum_{k=0}^{N}C_{N+1,k}^{(n)}H_k^{(n)}(\mathcal{K}),
\end{align}
where $C_{N+1,k}^{(n)}=[N-(k-1)][N-(k-2)]\cdots
[N-1][N][N+n][N+n+1]\cdots [N+n+(k-1)]$, for $k=1,..,N$, and
$C_{N+1,0}^{(n)}=1$.  In particular,
$J_0^{SU(n)}(\mathcal{K};q)=H_0^{(n)}(\mathcal{K})=1$.
\end{conjecture}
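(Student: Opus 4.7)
The plan is to prove the existence and integrality of the $H_k^{(n)}(\mathcal{K})$ by (i) using triangularity of the defining system to pin them down as rational functions in $q$, (ii) reducing consistency across different $N$ to a refined family of congruences generalising (1.3), and (iii) extracting integrality from a universal--invariant argument adapted from Habiro's original proof in the $\mathfrak{sl}_2$ case.

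Write $J_N := J_N^{SU(n)}(\mathcal{K};q)$ and $C_{N+1,k} := C_{N+1,k}^{(n)}$. Because $C_{N+1,N}\neq 0$ in $\mathbb{Q}(q)$, the lower triangular system
$$J_N = \sum_{k=0}^{N} C_{N+1,k}\, H_k, \qquad N = 0, 1, 2, \dots,$$
uniquely determines each $H_k\in\mathbb{Q}(q)$ by successively solving at $N=0,1,2,\dots$. The conjecture therefore splits into two assertions: (a) the $H_k$ computed recursively from $J_0,\dots,J_k$ are \emph{consistent} with every further equation $N>k$, so the sequence $\{H_k\}$ is well defined; and (b) each $H_k$ actually lies in $\mathbb{Z}[q^{\pm 1}]$.

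For (a), I would argue by induction on $k$ that
$$J_N \equiv \sum_{j=0}^{k-1} C_{N+1,j}\, H_j \pmod{C_{N+1,k}} \quad \text{for all } N\geq k,$$
with the residual $(J_N-\sum_{j<k}C_{N+1,j}H_j)/C_{N+1,k}$ independent of $N$. The base step is (1.3) of [CLPZ]; the inductive step combines (1.3) with the $t\mapsto -t$ symmetry that underlies our Theorem 1.1 and with the framing rescaling in (1.1)---these are the inputs that pin down the $[N-j]$ factors and the dual $[N+n+j]$ factors of $C_{N+1,k}^{(n)}$ respectively. For (b), I would mimic Habiro's strategy: lift $J_N^{SU(n)}(\mathcal{K};q)$ to the universal $U_q(\mathfrak{sl}_n)$-invariant of a $(1,1)$-tangle presentation of $\mathcal{K}$, and construct central elements $\omega_k^{(n)}\in \mathcal{Z}(U_q(\mathfrak{sl}_n))$ whose trace in the $N$-th symmetric representation equals $C_{N+1,k}^{(n)}$ (up to the explicit framing factor of (1.1)); then $H_k^{(n)}(\mathcal{K})$ is the coefficient of $\omega_k^{(n)}$ in the universal invariant, and its integrality follows from the Habiro--L\^e integrality theorem for universal invariants of bottom tangles written in the Lusztig integral form.

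The main obstacle is the construction of the central generators $\omega_k^{(n)}$ and the accompanying integrality statement for $U_q(\mathfrak{sl}_n)$. For $\mathfrak{sl}_2$ these are powers of $\sigma=(K-K^{-1})^2$, and Habiro--L\^e proved the required integrality; for general $n$ one needs a center-valued generator whose trace on the $(N)$-symmetric module produces exactly $[N-(k-1)]\cdots[N][N+n]\cdots[N+n+(k-1)]$, and one must then upgrade the integrality theorem beyond rank one. The $N\mapsto -N-n$ symmetry of $C_{N+1,k}^{(n)}$ and the rank symmetry of Theorem 1.1 strongly suggest the existence of such a generator, but making it explicit and promoting it to an integrality statement over $\mathbb{Z}[q^{\pm 1}]$ is the essential technical difficulty that prevents a proof in full generality and restricts present verification to small knots such as $3_1$ and $4_1$, where both (a) and (b) collapse to a finite direct check.
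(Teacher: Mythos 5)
First, a point of orientation: the statement you were asked to prove is Conjecture 1.3 (= Conjecture 2.4) of the paper, and the paper does \emph{not} prove it. It is stated as a conjecture, verified only for the figure-eight and trefoil knots via the closed formulas of \cite{IMMM} and \cite{FGS} (Examples 2.5 and 2.6, where $H_k^{(n)}(4_1)=\frac{[n-2+k]!}{[k]![n-2]!}$ and $H_k^{(n)}(3_1)=(-1)^kq^{k(2n+k-1)}\frac{[n-2+k]!}{[k]![n-2]!}$ are visibly in $\mathbb{Z}[q,q^{-1}]$), together with the computational data for $5_2$ and $6_1$ in the appendix. So there is no proof in the paper to compare against, and your proposal, by your own admission, is not one either.

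Beyond that, your step (a) is directed at a non-issue and contains a false assertion. Since $C_{N+1,N}^{(n)}\neq 0$ in $\mathbb{Q}(q)$, each equation $J_N=\sum_{k=0}^{N}C_{N+1,k}^{(n)}H_k^{(n)}$ introduces exactly one new unknown $H_N^{(n)}$, so the $H_k^{(n)}\in\mathbb{Q}(q)$ are automatically and uniquely determined and there are no consistency conditions across different $N$ to verify; the \emph{entire} content of the conjecture is the integrality $H_k^{(n)}\in\mathbb{Z}[q,q^{-1}]$. Moreover the residual $\bigl(J_N-\sum_{j<k}C_{N+1,j}^{(n)}H_j^{(n)}\bigr)/C_{N+1,k}^{(n)}$ is \emph{not} independent of $N$: it equals $H_k^{(n)}+\sum_{j>k}\bigl(C_{N+1,j}^{(n)}/C_{N+1,k}^{(n)}\bigr)H_j^{(n)}$, which depends on $N$ through the ratios of the $C$'s. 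What is true (and is how the paper derives the congruence (1.3) \emph{from} the conjecture, using the telescoping identities (2.14)) is a congruence for this residual, but that is a consequence of integrality, not a route to it; and note that (1.3) itself is only conjectural for $n\geq 3$ (Conjecture 7.10 of \cite{CLPZ}), so even the base step of your induction rests on an unproven statement. Finally, step (b) is where all the substance lies, and you concede it is open: one needs central elements $\omega_k^{(n)}$ in a completion of $U_q(\mathfrak{sl}_n)$ whose quantum traces on the symmetric modules produce exactly $C_{N+1,k}^{(n)}$, plus a Habiro--L\^e-type integrality theorem for the universal invariant beyond rank one. Neither is supplied, and the paper's Remark 2.7 points only to the closely related (and likewise unproved) Conjecture 2.8 of Nawata--Oblomkov. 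As a research program your outline is sensible, but as a proof it has a genuine gap, and the honest conclusion is the same as the paper's: at present the conjecture can only be checked on knots for which explicit formulas for $J_N^{SU(n)}$ are available.
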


\begin{example}
For the figure-eight knot $4_1$, by using  formula (4) in
\cite{IMMM}, we have
\begin{align}
H_k^{(n)}(4_1)=\frac{[n-2+k]!}{[k]![n-2]!}\in \mathbb{Z}[q,q^{-1}].
\end{align}
\end{example}

\begin{example}
For the trefoil knot $3_1$, by using formula (3.61) in
\cite{FGS} with $t=-1$, we have
\begin{align}
H_k^{(n)}(3_1)=(-1)^kq^{k(2n+k-1)}\frac{[n-2+k]!}{[k]![n-2]!}\in
\mathbb{Z}[q,q^{-1}].
\end{align}
\end{example}
See Appendix 4.2 for more examples of cyclotomic expansions of
$SU(n)$ invariants.

By some direct calculations, we have
\begin{align}
[N+n][N]-[k+n][k]&=[N-k][N+k+n], \\
[N+n+1][N-1]-[k+n+1][k-1]&=[N-k][N+k+n],  \notag \\
\cdots,  \notag \\
[N+k+n-1][N-k+1]-[2k+n-1][1]&=[N-k][N+k+n],  \notag
\end{align}
for $N>k\geq 0$.

Therefore, if we assume Conjecture 2.4 holds,  by similar
method in the proof of Theorem 7.3 in \cite{CLPZ}, we obtain, for
any knot $\mathcal{K}$ and $N\geq k\geq 0$,
\begin{align}
J_{N}^{SU(n)}(\mathcal{K};q)-J_k^{SU(n)}(\mathcal{K};q)\equiv 0 \mod
[N-k][N+k+n].
\end{align}
which is the congruent relation for $SU(n)$ invariants obtained in
\cite{CLPZ}. See Conjecture 7.10 in \cite{CLPZ}.

\begin{remark}
After the completion of this paper, we note that in a recent paper
by S. Nawata and A. Oblomkov \cite{NO}, they propose a conjectural
cyclotomic expansion formula for colored HOMFLYPT invariants with
symmetric representation (See Conjecture 2.3 in \cite{NO}). Let
$a=q^n$ in their conjecture, we obtain
\begin{conjecture}
For any knot $\mathcal{K}$, there exist $\tilde{H}_k(\mathcal{K})\in
\mathbb{Z}[q,q^{-1}]$, independent of $N$ with $N\geq 0$, such that
\begin{align}
J_{N}^{SU(n)}(\mathcal{K};q)=\tilde{H}_0(\mathcal{K})+\tilde{C}_{N+1,1}^{(n)}\tilde{H}_1(\mathcal{K})
+\tilde{C}_{N+1,2}^{(n)}\tilde{H}_2(\mathcal{K})+\cdots+\tilde{C}_{N+1,N}^{(n)}\tilde{H}_{N}(\mathcal{K}),
\end{align}
where
$\tilde{C}_{N+1,k}^{(n)}=\frac{1}{[k]!}\prod_{j=0}^{k-1}[N-j]\prod_{j=0}^{k-1}[N+n+j]$
for $k=1,..,N$. In particular,
$J_0^{SU(n)}(\mathcal{K};q)=\tilde{H}_0(\mathcal{K})=1$.
\end{conjecture}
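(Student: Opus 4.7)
The plan is to establish Conjecture 2.8 by first proving the stronger Conjecture 1.3 (\,=\,Conjecture 2.4) and then deducing integrality for $\tilde{H}_k$ as a corollary. The key observation is that $\tilde{C}_{N+1,k}^{(n)} = C_{N+1,k}^{(n)}/[k]!$, so if Conjecture 1.3 furnishes $H_k^{(n)}(\mathcal{K}) \in \mathbb{Z}[q,q^{-1}]$, then setting $\tilde{H}_k(\mathcal{K}) := [k]!\, H_k^{(n)}(\mathcal{K})$ produces elements of $\mathbb{Z}[q,q^{-1}]$ satisfying the required expansion, since $[k]! \in \mathbb{Z}[q,q^{-1}]$. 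Thus Conjecture 1.3 implies Conjecture 2.8, and the reverse implication fails only by a factor of $[k]!$, which is the source of the weaker integrality in 2.8.

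First I would establish existence and uniqueness of $\tilde{H}_k$ as elements of the rational function field $\mathbb{Q}(q)$. Since $\tilde{C}_{N+1,N}^{(n)} \neq 0$ and $\tilde{C}_{N+1,k}^{(n)} = 0$ whenever $k > N$, the linear system relating $\{J_M^{SU(n)}(\mathcal{K};q)\}_{M \le N}$ to $\{\tilde{H}_k(\mathcal{K})\}_{k \le N}$ is lower-triangular with nonzero diagonal entries. Hence the $\tilde{H}_k$ are uniquely determined rational functions of $q$ and can be computed recursively from $J_0^{SU(n)}, J_1^{SU(n)}, \ldots, J_k^{SU(n)}$. The real content of the conjecture is then the integrality claim $\tilde{H}_k \in \mathbb{Z}[q,q^{-1}]$.

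For integrality, the cleanest route is to specialize the two-variable cyclotomic expansion conjecture of Nawata--Oblomkov for the reduced colored HOMFLYPT polynomial $P_N(\mathcal{K};q,a)$, which predicts coefficients in $\mathbb{Z}[q^{\pm 1}, a^{\pm 1}]$; substituting $a = q^n$ preserves Laurent-polynomial integrality in $q$. As a sanity check, I would verify Conjecture 2.8 directly for the two families already controlled in the paper: Example 2.5 gives $\tilde{H}_k(4_1) = [k]! \cdot \frac{[n-2+k]!}{[k]![n-2]!} = \frac{[n-2+k]!}{[n-2]!}$, while Example 2.6 gives $\tilde{H}_k(3_1) = (-1)^k q^{k(2n+k-1)} \frac{[n-2+k]!}{[n-2]!}$, both manifestly in $\mathbb{Z}[q,q^{-1}]$.

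The main obstacle is establishing integrality in full generality. For $n=2$, Habiro's proof of Theorem 2.3 rests on a universal invariant taking values in a completion of the center of $U_q(\mathfrak{sl}_2)$, together with delicate integrality statements in the Habiro ring $\widehat{\mathbb{Z}[q,q^{-1}]}$. A parallel argument for general $n$ would require an analogous universal invariant for $U_q(\mathfrak{sl}_n)$ with comparable integrality control on characters of symmetric colorings, which so far appears only partially in the literature. An alternative path is to iterate the congruent relation (1.3) into an inductive construction of the $\tilde{H}_k$ by divided differences in $N$, but converting the divisibility $[N-k][N+k+n] \mid J_N^{SU(n)} - J_k^{SU(n)}$ into explicit Laurent-polynomial coefficients once again requires the fine denominator control on the quantum group side that constitutes the core difficulty.
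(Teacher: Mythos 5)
The statement you were asked to prove is Conjecture~2.8, and the paper does not prove it: it is introduced as the $a=q^{n}$ specialization of the Nawata--Oblomkov cyclotomic expansion conjecture, and the only thing the paper asserts about it is the one-line remark that it is a weak version of Conjecture~2.4. Your opening observation --- that $\tilde{C}_{N+1,k}^{(n)}=C_{N+1,k}^{(n)}/[k]!$, so that Conjecture~2.4 implies Conjecture~2.8 via $\tilde{H}_k:=[k]!\,H_k^{(n)}$, while the converse would require the extra divisibility of $\tilde{H}_k$ by $[k]!$ --- is exactly the content of that remark, and your verification on the two worked examples ($\tilde{H}_k(4_1)=[n-1][n]\cdots[n-2+k]$ and the analogous twisted product for $3_1$) is correct and is as far as the paper itself gets. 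Your triangularity argument for existence and uniqueness of the $\tilde{H}_k$ in $\mathbb{Q}(q)$ is also sound and correctly isolates integrality as the only real content.

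However, be clear that what you have written is not a proof and could not be graded as one: you reduce Conjecture~2.8 to Conjecture~1.3 (itself unproven in the paper, verified only for $3_1$ and $4_1$) or alternatively to the unproven two-variable Nawata--Oblomkov conjecture, and your final paragraph candidly concedes that the integrality mechanism (a universal $U_q(\mathfrak{sl}_n)$ invariant with Habiro-ring-type denominator control, or an effective version of the congruence $[N-k][N+k+n]\mid J_N^{SU(n)}-J_k^{SU(n)}$) is not available. That concession is accurate; the gap you name is genuine and is precisely why the paper states this as a conjecture rather than a theorem. In short: your analysis matches the paper's own (non-)treatment of the statement, correctly identifies the logical relationships among Conjectures~1.3, 2.8, and the Nawata--Oblomkov conjecture, and honestly locates the open difficulty, but it does not and cannot close it.
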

It is clear that Conjecture 2.8 is a weak version of Conjecture 2.4.
\end{remark}

\section{Congruent relations and limits of $SU(n)$ invariants}
In this section, we will show that the congruent relations is
closely related to part (i) of Conjecture 1.4.

Before discussing the limit behaviors of the general $SU(n)$
invariants, we first review the classical volume conjecture
\cite{Kashaev} for colored Jones polynomials \cite{MuMu}. In our
notations, it can be formulated as
\begin{conjecture}[Complex volume conjecture]
Let $\mathcal{L}$ be a hyperbolic link in $S^3$, then
\begin{align}
2\pi\lim_{N\rightarrow
\infty}\frac{J_{N}(\mathcal{L};\exp(\frac{\pi\sqrt{-1}}{N+1}))}{N+1}=Vol(S^3/\mathcal{L})+CS(S^3/\mathcal{L}).
\end{align}
\end{conjecture}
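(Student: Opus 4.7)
The plan is to attack Conjecture 3.1 along the classical Kashaev--Murakami--Murakami route: recast $J_{N}(\mathcal{L};\xi)$ with $\xi = \exp(\pi\sqrt{-1}/(N+1))$ as a state sum built from quantum dilogarithms, and then extract asymptotics via the saddle-point method, identifying the dominant critical points with the complete hyperbolic structure on $S^{3}\setminus\mathcal{L}$.

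First I would fix an ideal triangulation $\mathcal{T}$ of $S^{3}\setminus\mathcal{L}$ (for instance the Thurston--Yokota octahedral decomposition attached to a generic diagram) and rewrite $J_{N}(\mathcal{L};\xi)$ as a multi-dimensional state sum
$$J_{N}(\mathcal{L};\xi) \;=\; \sum_{\mathbf{k}} \exp\!\bigl((N+1)\, V_{\mathcal{T}}(\mathbf{z}_{\mathbf{k}}) + O(\log N)\bigr),$$
where $\mathbf{z}_{\mathbf{k}} = \exp(2\pi\sqrt{-1}\,\mathbf{k}/(N+1))$ runs over a lattice on a complex torus $(\mathbb{C}^{*})^{d}$, and $V_{\mathcal{T}}$ is a sum of Rogers dilogarithms, one for each ideal tetrahedron of $\mathcal{T}$, glued according to the combinatorics of $\mathcal{T}$. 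This step is standard once the R-matrix of $U_{q}(\mathfrak{sl}_{2})$ at $\xi$ is decomposed into tetrahedral building blocks.

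Next I would convert the discrete state sum into a contour integral via Poisson summation or Euler--Maclaurin, obtaining an expression of the form $\int_{\gamma} e^{(N+1)V_{\mathcal{T}}(\mathbf{z})}\,\Omega(\mathbf{z})$ for a suitable holomorphic top form $\Omega$, and apply the saddle-point method. The critical point equations $\partial V_{\mathcal{T}}/\partial z_{i} = 0$ reproduce Thurston's gluing equations together with the cusp completeness conditions, so their solutions correspond to boundary-parabolic $\mathrm{PSL}(2,\mathbb{C})$ representations of $\pi_{1}(S^{3}\setminus\mathcal{L})$. At the distinguished geometric representation, Neumann's formula identifies the critical value of $V_{\mathcal{T}}$ with $\tfrac{1}{2\pi}\bigl(\mathrm{CS}(S^{3}/\mathcal{L}) + \sqrt{-1}\,\mathrm{Vol}(S^{3}/\mathcal{L})\bigr)$ up to an explicit normalization that is absorbed into the prefactor $2\pi/(N+1)$ of (3.1); assembling this with the Morse-theoretic leading asymptotics of the integral yields the claimed limit.

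The main obstacle, which is exactly the reason Conjecture 3.1 remains open in general, is to make this saddle-point analysis rigorous for an arbitrary hyperbolic link. One must (i) deform the contour $\gamma$ past the accumulating poles of the quantum dilogarithm onto a steepest-descent cycle through the geometric saddle, (ii) show that every non-geometric critical point of $V_{\mathcal{T}}$ has strictly smaller real part, so it contributes only subdominant terms, and (iii) verify non-degeneracy of the Hessian of $V_{\mathcal{T}}$ at the geometric point so the standard stationary phase expansion applies. For the figure-eight knot $4_{1}$ treated in Section 3 of the paper the state sum is essentially one-dimensional and all three points can be handled by direct computation; the extension to a generic hyperbolic link would require uniform geometric and combinatorial control of the Neumann--Zagier potential on the distinguished component of the character variety, which at present is not available.
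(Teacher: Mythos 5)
There is a fundamental mismatch here: the statement you are asked about is Conjecture~3.1, the complex volume conjecture of Kashaev and Murakami--Murakami, which the paper does \emph{not} prove --- it is quoted purely as background and remains an open problem (the paper explicitly refers the reader to the survey \cite{Mu} for the state of the art). Your text is an accurate description of the standard Kashaev--Murakami--Murakami \emph{program} --- rewrite $J_N(\mathcal{L};\xi)$ as a quantum-dilogarithm state sum over an ideal triangulation, pass to a contour integral, and run a saddle-point analysis whose critical equations are Thurston's gluing and completeness equations, with Neumann's formula identifying the critical value with $\frac{1}{2\pi}(\mathrm{CS}+\sqrt{-1}\,\mathrm{Vol})$ --- but a program is not a proof, and you candidly concede the point yourself: steps (i)--(iii) in your last paragraph (deforming the contour past the poles of the quantum dilogarithm onto a steepest-descent cycle, showing the geometric saddle dominates all other critical points, and verifying non-degeneracy of the Hessian) are precisely the content of the conjecture. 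Without them nothing has been established. I would also push back on the claim that the first reduction is ``standard once the R-matrix is decomposed into tetrahedral building blocks'': for an arbitrary hyperbolic link even the identification of the potential of the state sum with the Neumann--Zagier potential on a genuine (positively oriented, non-degenerate) ideal triangulation involves serious technical issues, and is only carried out rigorously for restricted classes of diagrams in the literature.

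For what it is worth, the paper's own positive results go in a completely different and much more elementary direction: for the figure-eight knot it uses the explicit cyclotomic-type formula $J_{N-1}^{SU(n)}(4_1)=\sum_{j}\prod_{k=1}^{j}\frac{[n-2+k]}{[k]}[N-k][N+(n-2)+k]$, locates the index $k_m$ where the partial products are maximized, sandwiches the sum between the maximal term and $N$ times it, and recognizes the resulting Riemann sum as $\frac{2}{s\pi}\int_0^{5\pi/6}\log(2|\sin t|)\,dt=\frac{3}{\pi s}\Lambda(\pi/3)$, i.e.\ the volume of $S^3\setminus 4_1$ up to normalization (Lemmas~3.5--3.8 and Propositions~3.6, 3.9). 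No dilogarithm state sum or contour deformation appears anywhere. If your goal is to match what the paper actually proves, you should restrict to $4_1$ (or another knot with an explicit one-dimensional sum formula) and carry out this direct maximal-term analysis; if your goal is the general Conjecture~3.1, you must either supply (i)--(iii) or acknowledge that no proof is being given.
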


Many people have made a lot of efforts to prove the Conjecture 3.1,
see \cite{Mu} for a nice review. In the study of this conjecture,
it is natural to think  why we should take the unit root
$q=\frac{\pi \sqrt{-1}}{N+1}$ in $J_{N}(\mathcal{L};q)$, here the
label $N$ means the $N+1$-dimensional irreducible representations of
$U_{q}(sl_2)$. Namely, a natural question is what will happen if we
take the other roots of unity in the above limit? In this section,
we answer this question partially by the following theorem.
\begin{theorem}
Given a knot  $\mathcal{K}$ in $S^3$, let $a$ and $s$ be two fixed
integer and $a\neq 1$. If we take $q=\xi_{N,a}(s)=\exp \frac{s\pi
\sqrt{-1}}{N+a}$, then we have
\begin{align}
2\pi s\lim_{N\rightarrow
\infty}\frac{J_{N}(\mathcal{K};\xi_{N,a}(s))}{N+1}=0
\end{align}
\end{theorem}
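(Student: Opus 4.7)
The plan is to derive the statement as a direct consequence of the congruent relation of Theorem 2.2, which asserts that $J_{N}(\mathcal{K};q) - J_{k}(\mathcal{K};q) \equiv 0 \bmod [N-k][N+k+2]$ for every $0 \le k \le N$. The strategy is to choose, for each fixed $a\ne 1$, a non-negative integer $k$ that does not depend on $N$ but makes one of the factors $[N-k]$ or $[N+k+2]$ vanish at $q = \xi_{N,a}(s)$. Once such a $k$ is found, the congruent relation forces $J_{N}(\mathcal{K};\xi_{N,a}(s)) = J_{k}(\mathcal{K};\xi_{N,a}(s))$, which reduces the limit to evaluating a fixed Laurent polynomial in $q$ at a sequence of points tending to $1$.

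The key calculation uses the identity $q^{N+a} = \exp(s\pi\sqrt{-1}) = (-1)^{s}$ at $q = \xi_{N,a}(s)$. Writing $N-k = (N+a) - (a+k)$ and $N+k+2 = (N+a) + (k+2-a)$, one obtains
\begin{equation*}
[N-k]\bigm|_{q = \xi_{N,a}(s)} = -(-1)^{s}[a+k], \qquad [N+k+2]\bigm|_{q = \xi_{N,a}(s)} = (-1)^{s}[k+2-a].
\end{equation*}
For a fixed nonzero integer $m$, the value $[m]|_{q = \xi_{N,a}(s)} = 2\sqrt{-1}\sin(ms\pi/(N+a))$ is nonzero for all sufficiently large $N$ (we may assume $s\ne 0$, since the case $s=0$ makes the claim trivial), while $[0]=0$. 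Hence $[N-k]|_{\xi_{N,a}(s)} = 0$ exactly when $k = -a$, and $[N+k+2]|_{\xi_{N,a}(s)} = 0$ exactly when $k = a-2$.

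Splitting on $a$: if $a \ge 2$, take $k = a-2 \ge 0$, so that $[N+k+2]|_{\xi_{N,a}(s)} = 0$ and therefore $J_{N}(\mathcal{K};\xi_{N,a}(s)) = J_{a-2}(\mathcal{K};\xi_{N,a}(s))$ once $N \ge a-2$; if $a \le 0$, take $k = -a \ge 0$, so that $[N-k]|_{\xi_{N,a}(s)} = 0$ and therefore $J_{N}(\mathcal{K};\xi_{N,a}(s)) = J_{-a}(\mathcal{K};\xi_{N,a}(s))$ once $N \ge -a$. The hypothesis $a \ne 1$ is exactly what ensures these two cases exhaust all admissible integer values of $a$. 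In either case $J_{k}(\mathcal{K};q)$ is a Laurent polynomial in $q$ independent of $N$, so $J_{k}(\mathcal{K};\xi_{N,a}(s)) \to J_{k}(\mathcal{K};1)$ as $N \to \infty$ and is in particular bounded. The conclusion $J_{N}(\mathcal{K};\xi_{N,a}(s))/(N+1) \to 0$ is then immediate. No substantial obstacle is expected; the argument essentially consists of the short computation above together with a direct application of Theorem 2.2, and the only point worth verifying with care is that the two cases $a \ge 2$ and $a \le 0$ genuinely cover all integers $a \ne 1$ and that the corresponding $k$ is a non-negative integer in the range of the congruent relation.
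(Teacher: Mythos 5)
Your proposal is correct and follows essentially the same route as the paper: apply the congruent relation of Theorem 2.2 with $k=a-2$ when $a\geq 2$ and $k=-a$ when $a\leq 0$, observe that the factor $[N+a]$ vanishes at $q=\xi_{N,a}(s)$ so that $J_{N}$ collapses to a fixed Laurent polynomial $J_{k}$, and conclude by boundedness after dividing by $N+1$. Your write-up is in fact slightly more explicit than the paper's about why exactly those choices of $k$ kill a factor and why $a\neq 1$ is the precise hypothesis needed.
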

\begin{proof}
In fact, it is an easy consequence of Theorem 2.2. For
$a=2,3,4,...$, by Theorem 2.2, we have
\begin{align}
J_{N}(\mathcal{K};\exp \frac{s\pi
\sqrt{-1}}{N+a})=J_{a-2}(\mathcal{K};\exp \frac{s\pi
\sqrt{-1}}{N+a})
\end{align}

For a fixed $a$, $J_{a-2}(\mathcal{K},q)$ is a fixed polynomial of
$q$, it is clear that the value of $J_{a-2}(\mathcal{K};\exp
\frac{s\pi \sqrt{-1}}{N+a})$ is bounded, i.e. there exists a
constant $C$ independent of the $N$ such that
\begin{align}
|J_{a-2}(\mathcal{K};\exp \frac{s\pi \sqrt{-1}}{N+a})|< C.
\end{align}
So we have
\begin{align}
\lim_{N\rightarrow \infty}\frac{J_{N}(\mathcal{K};\exp \frac{s\pi
\sqrt{-1}}{N+a})}{N+1}=\lim_{N\rightarrow
\infty}\frac{J_{a-2}(\mathcal{K};\exp \frac{s\pi
\sqrt{-1}}{N+a})}{N+1}=0
\end{align}

For $a=0,-1,-2,-3,....$, then $-a=0,1,2,3,..$. Theorem 2.2 also
gives
\begin{align}
J_{N}(\mathcal{K};\exp \frac{s\pi
\sqrt{-1}}{N-(-a)})=J_{-a}(\mathcal{K};\exp \frac{s\pi
\sqrt{-1}}{N+a}).
\end{align}
Similarly, we obtain
\begin{align}
\lim_{N\rightarrow \infty}\frac{J_{N}(\mathcal{K};\exp \frac{s\pi
\sqrt{-1}}{N+a})}{N+1}=\lim_{N\rightarrow
\infty}\frac{J_{-a}(\mathcal{K};\exp \frac{s\pi
\sqrt{-1}}{N+a})}{N+1}=0
\end{align}
\end{proof}
So we finish the proof of part (i) of Conjecture 1.4 for $n=2$.

We note that when $a=1$, the above limit is the complex  volume by
Conjecture 3.1. So in conclusion, the limit is equal to complex
volume for $a=1$ or is equal to zero if $a$ takes the other
integers. Why only $a=1$ gives the nonzero limit? We observe that
the mod term in formula (2.8) is given by $[N][N+2]$ if $k=0$, and
$N+1$ is the middle integer between $N$ and $N+2$.

Now it is natural to consider the case of $SU(n)$ invariant
$J_{N}^{SU(n)}(\mathcal{K})$. We observe that the mod term in the
congruent relation formula (2.15) is now given by $[N][N+n]$ when
$k=0$. So the middle integers between $N$ and $N+n$ contain
$N+1,N+2,...,N+n-1$. Therefore, as the application of congruent
relation formula (2.15), similarly we have, if $a\in
\mathbb{Z}\backslash \{1,2,..,n-1\}$, then
\begin{align}
2\pi s\lim_{N\rightarrow
\infty}\frac{J_{N}^{SU(n)}(\mathcal{K};\xi_{N,a}(s))}{N+1}=0.
\end{align}
Furthermore, as for $a\in \{1,2,..,n-1\}$, we believe that the above limit
is also convergent to the complex volume. So we  propose the
following conjecture:
\begin{conjecture}[=Conjecture 1.4]
(i) If $a\in \mathbb{Z}\backslash \{1,2,..,n-1\}$, then for any knot
$\mathcal{K}$:
\begin{align}
2\pi s\lim_{N\rightarrow \infty}\frac{\log
J_{N}^{SU(n)}(\mathcal{K};\xi_{N,a}(s))}{N+1}=0.
\end{align}
(ii) If $a\in \{1,2,...,n-1\}$, then
\begin{align}
2\pi s \lim_{N\rightarrow \infty} \frac{\log
J_{N}^{SU(n)}(\mathcal{K};\xi_{N,a}(s))
}{N+1}=Vol(S^3/\mathcal{K})+\sqrt{-1}CS(S^3/\mathcal{K})
\end{align}
for any hyperbolic knot $\mathcal{K}$.
\end{conjecture}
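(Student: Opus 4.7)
The plan is to address the two parts of Conjecture 3.3 by very different routes, using the cyclotomic expansion of Conjecture 2.4 as the unifying framework. Part (i) should reduce to a combinatorial vanishing statement parallel to the argument the paper gives for $n=2$ in Theorem 3.2, while part (ii) is the genuine quantitative volume statement and requires an asymptotic analysis.

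For part (i) I would substitute Conjecture 2.4 and track the vanishing of the factors $[N+r]$ appearing in $C_{N+1,k}^{(n)}$. A direct calculation yields
\[
[N+r]_{q=\xi_{N,a}(s)}=(-1)^{s}\cdot 2i\sin\!\left(\tfrac{s(r-a)\pi}{N+a}\right),
\]
which is identically $0$ when $r=a$ and is $O(1/N)$ otherwise. In $C_{N+1,k}^{(n)}$ the index $r$ ranges over the two disjoint blocks $\{-(k-1),\ldots,0\}$ and $\{n,\ldots,n+k-1\}$. Since $a\in\mathbb{Z}\setminus\{1,\ldots,n-1\}$, the integer $a$ enters one of these blocks as soon as $k\geq k_{0}(a,n):=\max(1,\,1-a,\,a-n+1)$, and from that point $C_{N+1,k}^{(n)}$ vanishes identically at $q=\xi_{N,a}(s)$. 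The cyclotomic sum thus truncates to $\sum_{k<k_{0}}C_{N+1,k}^{(n)}H_{k}^{(n)}(\mathcal{K})$, whose $k=0$ contribution equals $H_{0}^{(n)}(\mathcal{K})=1$ and whose higher-$k$ terms are $O(1/N^{2k})$. Hence $J_{N}^{SU(n)}(\mathcal{K};\xi_{N,a}(s))\to 1$, so $\log(\cdot)/(N+1)\to 0$. For $n=2$ Theorem 2.2 makes this argument unconditional and recovers Theorem 3.2; for $n\geq 3$ it is conditional on Conjecture 2.4, and unconditional for any knot where that conjecture has been checked, e.g.\ $3_{1}$ and $4_{1}$ via Examples 2.5--2.6.

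For part (ii) the vanishing mechanism fails precisely because $a\in\{1,\ldots,n-1\}$ lies in the gap between the two blocks, so every quantum integer $[N+r]$ in $C_{N+1,k}^{(n)}$ is $O(1/N)$ but none is identically zero, and all $N+1$ summands can contribute. I would extract the asymptotics by a saddle-point / state-integral technique: (a) rewrite the cyclotomic sum as a contour integral in a variable $x\approx k/(N+a)$, replacing each $[N+r]$ by the appropriate Faddeev quantum dilogarithm $\Phi_{b}$ with $b^{2}\sim 1/(N+a)$; (b) compute the saddle-point equations in $x$ and match them with a specialization of the gluing equations for an ideal triangulation of $S^{3}\setminus\mathcal{K}$; (c) identify the critical value of the phase with the Neumann--Zagier potential, which should give $Vol(S^{3}\setminus\mathcal{K})+\sqrt{-1}\,CS(S^{3}\setminus\mathcal{K})$. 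The fact that the denominator in the conjecture is $N+1$ rather than $N+a$ is harmless since $(N+a)/(N+1)\to 1$.

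The main obstacle is the lack of a universal explicit formula for $H_{k}^{(n)}(\mathcal{K})$. For the figure-eight knot the clean expression $H_{k}^{(n)}(4_{1})=[n-2+k]!/([k]!\,[n-2]!)$ is concrete enough that step (a) produces an honest state integral and the saddle-point analysis goes through, yielding Theorem 1.5; a similar strategy should handle the trefoil via the signed formula of Example 2.6. For a general hyperbolic knot one would need either a geometric derivation of $H_{k}^{(n)}(\mathcal{K})$ from an ideal triangulation or a new $R$-matrix integral representation of $J_{N}^{SU(n)}$ adapted to the specialization $q=\xi_{N,a}(s)$. Even granting such a representation, extending the Neumann--Zagier matching to the full two-parameter family $(a,s)$ with $a\in\{1,\ldots,n-1\}$ — beyond Kawagoe's case $a=n-1$, $s=1$ — is where I expect genuinely new input: the saddle-point ought to select, for each triple $(n,a,s)$, a flat $SL(n,\mathbb{C})$ connection on $S^{3}\setminus\mathcal{K}$ that reduces to the geometric $SL(2,\mathbb{C})$ representation in the classical limit, and controlling this uniformly in $(a,s)$ is the heart of the conjecture.
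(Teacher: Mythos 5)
First, a framing point: the statement is a \emph{conjecture}, and the paper does not prove it in general; what the paper actually supplies is (a) a proof of part (i) for $n=2$ (Theorem 3.2, via Habiro's cyclotomic expansion and the congruence $J_N\equiv J_k \bmod [N-k][N+k+2]$), (b) the observation that for general $n$ part (i) follows ``similarly'' from the congruent relation (2.15), which is itself conditional on Conjecture 2.4, and (c) a complete proof of both parts for the figure-eight knot (Lemmas 3.5, 3.7, 3.8 and Propositions 3.6, 3.9). Your part (i) is correct and is in the same spirit as the paper's: you truncate the cyclotomic sum directly by locating the index $r=a$ at which $[N+r]$ vanishes identically at $q=\xi_{N,a}(s)$, whereas the paper first packages the same vanishing into the congruence $J_N\equiv J_{k}\bmod [N-k][N+k+n]$ and then invokes boundedness of a fixed polynomial. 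Your version is marginally sharper: it yields $J_N^{SU(n)}(\mathcal{K};\xi_{N,a}(s))\to 1$, which is exactly what is needed to control $\log J_N$ (boundedness alone, as in the paper's Theorem 3.2, does not rule out $\log J_N\to-\infty$; one also needs $J_{a-2}(\mathcal{K};\xi_{N,a}(s))\to J_{a-2}(\mathcal{K};1)=1$, which the paper leaves implicit). Both arguments are unconditional for $n=2$ and conditional on Conjecture 2.4 (equivalently, on the congruent relations) for $n\geq 3$, so on this half you and the paper are on equal footing.

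For part (ii) there is a genuine gap, and your route also diverges from what the paper actually does in the one case it settles. You propose a quantum-dilogarithm/state-integral saddle-point analysis matched to Neumann--Zagier data; this is a reasonable program but none of its steps (the integral representation, the identification of the saddle with the gluing equations, the uniformity in $(a,s)$ and in the choice of flat $SL(n,\mathbb{C})$ connection) is carried out, and you acknowledge as much. More to the point, for the figure-eight knot the paper does \emph{not} use any of this machinery: it takes the explicit positive-term formula $J_{N-1}^{SU(n)}(4_1)=\sum_{j}\prod_{k=1}^{j}f^{SU(n)}(N,k)$ with $f^{SU(n)}$ an explicit product of sines, locates the index $k_m^{(p_0)}$ maximizing the partial products (Lemma 3.7), sandwiches the sum between the maximal term and $N$ times it (Lemma 3.8, where the only delicate issue is controlling the finitely many possibly negative terms near $k\approx i(N+\tilde a)/s$), and evaluates the logarithm of the maximal term as a Riemann sum converging to the Lobachevsky integral $\int_0^{5\pi/6}\log|2\sin t|\,dt$, giving $6\Lambda(\pi/3)=\mathrm{Vol}(S^3\setminus 4_1)$. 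So your claim that ``the saddle-point analysis goes through, yielding Theorem 1.5'' is an assertion, not a proof, and it is not the paper's proof. Note also that the elementary real-variable argument suffices for $4_1$ only because $4_1$ is amphichiral and $CS(S^3\setminus 4_1)=0$; capturing the imaginary part for a general hyperbolic knot is precisely where your (unexecuted) complex saddle-point framework would have to do real work.
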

In the following, we will prove

\begin{theorem}
[=Theorem 1.5]The Conjecture 3.3 holds for the figure-eight knot $4_{1}$.
\end{theorem}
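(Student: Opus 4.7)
The plan is to use the explicit cyclotomic expansion for $4_{1}$ from Example~2.5,
\[
J_{N}^{SU(n)}(4_{1};q) \;=\; \sum_{k=0}^{N}\Bigl(\prod_{j=0}^{k-1}[N-j][N+n+j]\Bigr)\,\frac{[n-2+k]!}{[k]!\,[n-2]!},
\]
and, after rewriting $\frac{[n-2+k]!}{[k]!\,[n-2]!}=\prod_{j=0}^{k-1}\frac{[n-1+j]}{[j+1]}$, to treat each summand in the symmetric form $\prod_{j=0}^{k-1}\frac{[N-j][N+n+j][n-1+j]}{[j+1]}$. The two parts of Conjecture~1.4 for $4_{1}$ correspond to two very different asymptotic regimes of this sum evaluated at $q=\xi_{N,a}(s)$.

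For part (i), suppose first $a\le 0$ and set $k_{0}=-a$. At $q=\xi_{N,a}(s)$ the identity $[N+a]=e^{s\pi\sqrt{-1}}-e^{-s\pi\sqrt{-1}}=0$ kills every summand with $k\ge k_{0}+1$ (through the factor $[N-k_{0}]=[N+a]$), while the elementary identities $[N-j]_{\xi}=(-1)^{s}[k_{0}-j]_{\xi}$ and $[N+n+j]_{\xi}=(-1)^{s}[k_{0}+n+j]_{\xi}$ (noting $(-1)^{2s}=1$) identify the truncated sum with $J_{k_{0}}^{SU(n)}(4_{1};\xi_{N,a}(s))$. Since $J_{k_{0}}^{SU(n)}(4_{1};q)$ is a fixed Laurent polynomial in $q$ and $\xi_{N,a}(s)\to 1$ as $N\to\infty$, continuity gives $J_{k_{0}}^{SU(n)}(4_{1};\xi_{N,a}(s))\to J_{k_{0}}^{SU(n)}(4_{1};1)=1$, the last equality coming from a direct inspection of the cyclotomic expansion at $q=1$ (every summand with $k\ge 1$ carries a vanishing factor of the form $[N-j][N+n+j]$). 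Hence $\log J_{N}/(N+1)\to 0$. The case $a\ge n$ is symmetric, with $k_{0}=a-n$ and vanishing factor $[N+k_{0}+n]=[N+a]$.

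For part (ii), with $a\in\{1,\dots,n-1\}$ and (say) $s\ge 1$, the strategy is a Laplace/saddle-point analysis. Writing $[m]=2\sqrt{-1}\sin(s\pi m/(N+a))$ and unpacking the signs via $\sin(\pi\pm x)=\mp\sin x$ together with a careful count of the three numerator and one denominator factors of $\sqrt{-1}$, each summand turns out to be a positive real number of magnitude
\[
T_{N,k}=\prod_{j=0}^{k-1}\frac{4\sin\!\bigl(s\pi(a+j)/(N+a)\bigr)\sin\!\bigl(s\pi(n-a+j)/(N+a)\bigr)\sin\!\bigl(s\pi(n-1+j)/(N+a)\bigr)}{\sin\!\bigl(s\pi(j+1)/(N+a)\bigr)}.
\]
In the scaling limit $j/N\to t$ all four sine arguments converge to the single value $s\pi t$, so $\log T_{N,k}$ is asymptotic to $2N\int_{0}^{k/N}\log|2\sin(s\pi t)|\,dt$. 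Laplace's method identifies the leading exponent with $\max_{x\in[0,1]}2\int_{0}^{x}\log|2\sin(s\pi t)|\,dt$; a change of variables $u=s\pi t$, the $\pi$-periodicity of $\Lambda(\theta)=-\int_{0}^{\theta}\log|2\sin u|\,du$, and the identity $\Lambda(\pi-\theta)=-\Lambda(\theta)$ evaluate this maximum as $\frac{2\Lambda(\pi/6)}{s\pi}$, attained where $|2\sin(s\pi x^{*})|=1$ with the integrand passing from positive to negative. Multiplying by $2\pi s$ recovers $4\Lambda(\pi/6)=\mathrm{Vol}(S^{3}\setminus 4_{1})$; since $4_{1}$ is amphichiral and the summands $T_{N,k}$ are real positive, the vanishing imaginary part matches $CS(S^{3}\setminus 4_{1})=0$.

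The main obstacle is the rigorous Laplace step uniform in the parameters $a$, $n$, $s$: one must show that the three $O(1/N)$ shifts between the four sine arguments produce only a bounded multiplicative correction rather than an $O(N)$ term in the exponent (the relevant ratio is $\prod(1+O(1/N))=e^{O(1)}$); one must estimate the width of the peak around $x^{*}$ to control the subexponential prefactor; and, for $|s|\ge 2$, one must rule out catastrophic cancellation from sign changes of the individual sines over the summation range via a concavity/unimodality analysis of $\Phi(x)=2\int_{0}^{x}\log|2\sin(s\pi t)|\,dt$ on each interval where the integrand has constant sign. The case $s\le -1$ then follows from the $s\ge 1$ analysis by complex conjugation.
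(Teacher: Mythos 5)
Your overall strategy coincides with the paper's: part (i) via the cyclotomic/congruence structure (the paper simply invokes the congruent relation for $4_1$ from Theorem 7.11 of \cite{CLPZ}, which is what your root-of-unity truncation re-derives), and part (ii) via writing each summand as a product of sine ratios, locating the maximizing index near $\frac{5}{6}(N+a)$ modulo the period, sandwiching $J_{N}^{SU(n)}(4_1)$ between the maximal term and $N$ times it, and evaluating the resulting Riemann sum by the Lobachevsky function, with $-4\Lambda(\tfrac{5\pi}{6})=6\Lambda(\tfrac{\pi}{3})=\mathrm{Vol}(S^{3}\setminus 4_{1})$. Part (i) is essentially complete.

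For part (ii) there are two genuine gaps, and they are precisely where the paper's technical work lies. First, your assertion that ``each summand turns out to be a positive real number'' is false in general: the arguments $s\pi(n-1+j)/(N+a)$ and $s\pi(n-a+j)/(N+a)$ exceed $\pi$ for $j$ near $N$ already when $s=1$ and $a\le n-3$, and for $|s|\ge 2$ all four sines change sign repeatedly over the summation range. Only for $a=1$ and $a=n-1$ does the factor collapse to a perfect square $4\sin^{2}(\cdot)$, which is why the paper isolates those cases (Lemma 3.5) and then devotes Lemma 3.8 to showing that the negative summands, concentrated near $j\approx i(N+\tilde a)/s$, are dominated by adjacent positive ones so that the lower bound $J_{N-1}^{SU(n)}(4_1)\ge g^{SU(n)}(N,k_m^{(p_0)})$ survives. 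Without some such argument the sandwich inequality --- the heart of the method --- is unjustified; you flag cancellation only for $|s|\ge 2$, but it must be addressed for $s=1$ as well. Second, the claim that the $O(1/N)$ shifts among the four sine arguments contribute only $\prod(1+O(1/N))=e^{O(1)}$ fails exactly at indices where a sine has size $O(1/N)$: there the ratio of shifted sines is not $1+O(1/N)$ and $\log|\sin|$ is not Lipschitz, so one needs the separate estimate $f^{SU(n)}(N,k^{(i)}+l)=O((N+\tilde a)^{-2})$ and the grouping of such terms (the paper's (3.43)--(3.47)). You correctly name these as obstacles, but they are not routine uniformity issues to be deferred; resolving them is the proof.
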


The whole proof of the Theorem 3.4 is long, so we divide the Theorem 3.4 into
the following two propositions and lemmas.

\begin{lemma}
For $a=1$ or $n-1$, we have the following
\begin{align}
\lim_{N\rightarrow\infty} \frac{\log J_{N-1}^{SU(n)}(4_{1};\xi_{N-1,a}(s))
}{N}=\frac{2}{\pi s}\int_{0}^{\frac{5\pi}{6}}\log2\sin(x)dx.
\end{align}

\end{lemma}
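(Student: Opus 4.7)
The starting point is the cyclotomic expansion of $4_{1}$ given by Example~2.5, which, with $N$ replaced by $N-1$, reads
\begin{align*}
J_{N-1}^{SU(n)}(4_{1};q)=\sum_{k=0}^{N-1}\prod_{j=0}^{k-1}[N-1-j]\,[N-1+n+j]\cdot\frac{[n+k-2]!}{[k]!\,[n-2]!}.
\end{align*}
At $q=\xi_{N-1,1}(s)=e^{s\pi\sqrt{-1}/N}$ one has $[m]=2\sqrt{-1}\sin(s\pi m/N)$, and the shift identities $\sin(s\pi-x)=(-1)^{s+1}\sin x$, $\sin(s\pi+x)=(-1)^{s}\sin x$ collapse the paired product into
\begin{align*}
\prod_{j=0}^{k-1}[N-1-j][N-1+n+j]=4^{k}\prod_{j=0}^{k-1}\sin\!\Bigl(\tfrac{s\pi(j+1)}{N}\Bigr)\sin\!\Bigl(\tfrac{s\pi(n+j-1)}{N}\Bigr).
\end{align*}
For $a=n-1$ one instead sets $M=N+n-2$; the same identities produce an identical expression with $N$ replaced by $M$, and since $M/N\to 1$ the two cases yield the same asymptotic limit, so we henceforth treat $a=1$.

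The Gaussian binomial $\binom{n+k-2}{n-2}_{q}=\prod_{j=1}^{n-2}[k+j]/[j]$ is real at our root of unity and of polynomial order $O(N^{n-2})$ when $k$ is of order $N$, so it contributes only $O(\log N)$ to the logarithm of the $k$-th term. Setting $x=k/N$, the standard Riemann-sum approximation gives
\begin{align*}
\frac{1}{N}\log\prod_{j=0}^{k-1}\Bigl|2\sin\!\Bigl(\tfrac{s\pi(j+1)}{N}\Bigr)\Bigr|\longrightarrow\frac{1}{s\pi}\int_{0}^{s\pi x}\log|2\sin u|\,du=:\frac{1}{s\pi}F(s\pi x)
\end{align*}
(the second sine product has the same limit, as its shift by $n-1$ in the argument is $O(1/N)$), so
\begin{align*}
\frac{1}{N}\log\bigl|\text{term}_{k}\bigr|\longrightarrow\frac{2}{s\pi}F(s\pi x).
\end{align*}

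The proof then reduces to Laplace's method. Since $F'(\alpha)=\log|2\sin\alpha|$ vanishes precisely at $|\sin\alpha|=\tfrac{1}{2}$, and $F$ is $\pi$-periodic (because $\int_{0}^{\pi}\log(2\sin u)\,du=0$), the global maximum of $F$ over $[0,s\pi]$ equals $F(5\pi/6)=\int_{0}^{5\pi/6}\log(2\sin u)\,du$, attained at $x^{\ast}=5/(6s)$ (and its $\pi$-periodic translates in the admissible range). Hence
\begin{align*}
\lim_{N\to\infty}\frac{\log\bigl|J_{N-1}^{SU(n)}(4_{1};\xi_{N-1,a}(s))\bigr|}{N}=\frac{2}{s\pi}\int_{0}^{5\pi/6}\log(2\sin u)\,du.
\end{align*}
Each summand is real, and the summands near $x^{\ast}$ have a common sign, so any global sign of the full sum contributes only $O(1/N)$ to $(\log J)/N$; the limits of $\log|J|$ and $\log J$ therefore coincide.

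The principal obstacle is the rigorous Laplace step in the presence of the sign oscillations of the sine factors when $s>1$, which prevents a naive term-by-term majorization. We handle this by a second-order Taylor expansion of $F$ at $x^{\ast}$, producing a Gaussian contribution of width $O(N^{-1/2})$ in which the slow phase variation permits an Abel/Euler--Maclaurin summation, yielding matching upper and lower bounds on $\log|J_{N-1}^{SU(n)}|/N$. This is the standard Kashaev--Murakami template for the figure-eight knot, adapted here to the $(s,n)$-parameter family.
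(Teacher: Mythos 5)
Your argument follows essentially the same route as the paper's proof of this lemma: express $J_{N-1}^{SU(n)}(4_1)$ at the root of unity as a sum of products of sines, locate the maximizing index through the critical points of the primitive $F(\alpha)=\int_0^\alpha\log|2\sin u|\,du$ (i.e.\ $|2\sin\alpha|=1$), squeeze the sum between its largest term and $N$ times that term, pass to the Riemann-sum limit, and use the $\pi$-periodicity of the Lobachevsky function to reduce $(p-\tfrac16)\pi$ to $\tfrac{5\pi}{6}$. The one substantive difference is that you keep the Gaussian binomial $\prod_{j=1}^{n-2}[k+j]/[j]$ as a separate, polynomially bounded factor, whereas the paper telescopes the entire $k$-th factor $\tfrac{[n-2+k]}{[k]}[N-k][N+n-2+k]$ into the perfect square $\bigl(2\sin\tfrac{(n-2+k)s\pi}{N+a-1}\bigr)^2$ precisely for $a=1$ and $a=n-1$. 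That telescoping is worth doing: it makes every summand manifestly non-negative, so the ``principal obstacle'' you flag --- sign oscillations for $s>1$ blocking the lower bound --- simply does not arise for these two values of $a$, and the second-order Taylor/Gaussian/Euler--Maclaurin machinery you invoke (but do not actually carry out) is unnecessary; the naive two-sided bound by the largest term already suffices. Sign cancellation is a genuine issue only for the intermediate values $a=2,\dots,n-2$, which the paper treats separately in Lemma 3.8. The one point that both you and the paper pass over lightly is the uniformity of the Riemann-sum convergence across the integrable logarithmic singularities of $\log|2\sin u|$ when $s>1$; this is handled by restricting to $N$ coprime to $s$, as the paper does.
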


\begin{proof}
By formula (4) in \cite{IMMM}, we have
\begin{align}
J_{N-1}^{SU(n)}(4_{1})=\sum_{j=0}^{N-1}\prod_{k=1}^{j}\frac{[n-2+k]}%
{[k]}[N-k][N+(n-2)+k].
\end{align}
For convenience, we introduce the following notations
\begin{align}
f^{SU(n)}(N,k) & =\frac{[n-2+k]}{[k]}[N-k][N+(n-2)+k],\\
g^{SU(n)}(N,j) & =\prod_{k=1}^{j}f^{SU(n)}(N,k).\nonumber
\end{align}
When $q=\xi_{N-1,a}(s)=\exp(\frac{\pi s\sqrt{-1}}{N+\tilde{a}})$, where
$\tilde{a}=a-1$. We have
\begin{align}
[l]=q^{l}-q^{-l}=2\sqrt{-1}\sin\frac{ls\pi}{N+a}.
\end{align}

It is easy to show that
\begin{align}
f^{SU(n)}(N,k)=\frac{\sin\frac{(n-2+k)s\pi}{N+\tilde{a}}}{\sin\frac{ks\pi
}{N+\tilde{a}}}4\left( \sin\frac{(k+\tilde{a})s\pi}{N+\tilde{a}}\right) \left(
\sin\frac{((n-2)-\tilde{a}+k)s\pi}{N+\tilde{a}}\right) .
\end{align}

We prove a special case at first. Substituting $\tilde{a}=0$ ( or $ a=1$) to
the above formula, we have
\begin{align}
f^{SU(n)}(N,k)=\left( 2\sin\frac{(n-2+k)s\pi}{N}\right) ^{2}.
\end{align}
One can show that the function $g^{SU(n)}(N,j)=\prod_{k=1}^{j}f^{SU(n)}(N,k)$
takes the maximum value at $j=\lfloor(\frac{p}{s}-\frac{1}{6s})N-(n-2)\rfloor$
(here we can take the large $N$, such that $\frac{N}{s}$ are coprime.) for
some $1\leq p \leq s$.

Therefore, we have
\begin{align}
g^{SU(n)}(N,\lfloor(\frac{p}{s}-\frac{1}{6s})N-(n-2)\rfloor)\leq &
J_{N-1}^{SU(n)}(4_{1})\\
& \leq N\cdot g^{SU(N)}(N,\lfloor(\frac{p}{s}-\frac{1}{6s})N-(n-2)\rfloor
).\nonumber
\end{align}
Hence
\begin{align}
\frac{\log(g^{SU(n)}(N,\lfloor(\frac{p}{s}-\frac{1}{6s})N-(n-2)\rfloor)}{N} &
\leq\frac{\log(J_{N-1}^{SU(n)}(4_{1}))}{N}\\
& \leq\frac{\log(N)}{N}+ \frac{\log(g^{SU(N)}(N,\lfloor(\frac{p}{s}-\frac
{1}{6s})N-(n-2)\rfloor))}{N}.\nonumber
\end{align}

Since $\lim_{N\rightarrow\infty} \frac{\log(N)}{N}=0$, we have
\begin{align}
\lim_{N\rightarrow\infty} \frac{\log(J_{N-1}^{SU(n)}(4_{1}))}{N} &
=\lim_{N\rightarrow\infty}\frac{\log(g^{SU(n)}(N,\lfloor(\frac{p}{s}-\frac
{1}{6s})N-(n-2)\rfloor))}{N}\\
& =\lim_{N\rightarrow\infty}\frac{2}{N}\sum_{k=1}^{\lfloor(\frac{p}{s}%
-\frac{1}{6s})N-(n-2)\rfloor}\log( 2 |\sin\frac{(n-2+k)s\pi}{N}|)\nonumber
\end{align}

Set $(\frac{p}{s}-\frac{1}{6s})N-(n-2)=M$, then
\begin{align}
\frac{1}{N}=\frac{1}{M+n-2}(\frac{p}{s}-\frac{1}{6s}),
\end{align}
and
\begin{align}
&  \frac{2}{N}\sum_{k=1}^{\lfloor(\frac{p}{s}-\frac{1}{6s})N-(n-2)\rfloor}%
\log(2|\sin\frac{(n-2+k)s\pi}{N}|)\\
&  =\frac{2}{M+n-2}(\frac{p}{s}-\frac{1}{6s})\sum_{k=1}^{\lfloor M\rfloor}%
\log\left(  2|\sin\frac{(n-2+k)(p-\frac{1}{6})\pi}{M+(n-2)}|\right)
\nonumber\\
&  =\frac{2}{M+n-2}(\frac{p}{s}-\frac{1}{6s})\sum_{l=n-1}^{\lfloor
M\rfloor+n-2}\log\left(  2|\sin\frac{l(p-\frac{1}{6})\pi}{M+(n-2)}|\right)
\nonumber\\
&  =\frac{2}{s\pi}\frac{1}{M+n-2}(p-\frac{1}{6})\pi\sum_{l=1}^{\lfloor
M\rfloor+n-2}\log\left(  2\sin|\frac{l(p-\frac{1}{6})\pi}{M+(n-2)}|\right)
\nonumber\\
&  -\frac{2}{M+n-2}(\frac{p}{s}-\frac{1}{6s})\sum_{l=1}^{n-2}\log\left(
2|\sin\frac{l(p-\frac{1}{6})\pi}{M+(n-2)}|\right)  \nonumber
\end{align}
For fixed $n,s$, when $N\rightarrow\infty$, the second term goes to zero, and
the first term is equal to the integral
\begin{align}
\frac{2}{s\pi}\int_{0}^{(p-\frac{1}{6})\pi}\log(2|\sin t|)dt.
\end{align}

Considering the Lobachevsky function
\begin{align}
\Lambda(x)=-\int_{0}^{x}\log(2|\sin t|)dt,
\end{align}
it has the period $\pi$, thus
\begin{align}
\Lambda((p-\frac{1}{6})\pi)=\Lambda(\frac{5\pi}{6}).
\end{align}
So we finish the proof for $\tilde{a}=0$ case.

For $\tilde{a}=n-2$ $(a=n-1)$, formula (3.15) gives
\begin{align}
f^{SU(n)}(N,k)=\left( 2\sin\frac{(n-2+k)s\pi}{N+n-2}\right) ^{2}.
\end{align}

Then we can finish the proof following the similar statement just as
$\tilde{a}=0$ case, or refer to \cite{Kaw1} for this case.
\end{proof}

\begin{proposition}
For the figure-eight knot $4_{1}$, part (i) of the Conjecture 3.3 holds; part
(ii) of the Conjecture 3.3 holds when $a=1$ or $a=n-1$.
\end{proposition}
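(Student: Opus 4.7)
The plan is to split the proposition into parts (i) and (ii) and dispatch each with a different tool: part (i) via the congruent relation that drops out of the cyclotomic expansion known for $4_1$, and part (ii) via Lemma 3.5 together with the standard identities for the Lobachevsky function.

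For part (i) with $a\in\mathbb{Z}\setminus\{1,\ldots,n-1\}$, I would use that Example 2.5 verifies Conjecture 2.4 for $4_1$, which by the reasoning immediately following Conjecture 2.4 gives the congruent relation (2.15) for $4_1$: for every $0\leq k\leq N$,
\begin{align*}
J_N^{SU(n)}(4_1;q)-J_k^{SU(n)}(4_1;q)=[N-k]\,[N+k+n]\,P_{N,k}(q)
\end{align*}
for some $P_{N,k}(q)\in\mathbb{Z}[q,q^{-1}]$. At $q=\xi_{N,a}(s)$ we have $q^{N+a}=e^{s\pi\sqrt{-1}}=(-1)^s$, hence $[N+a](\xi_{N,a}(s))=0$. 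I would take $k=-a$ when $a\leq 0$ (making $[N-k]=[N+a]$ vanish) and $k=a-n$ when $a\geq n$ (making $[N+k+n]=[N+a]$ vanish); in either case, for $N$ large enough,
\begin{align*}
J_N^{SU(n)}(4_1;\xi_{N,a}(s))=J_k^{SU(n)}(4_1;\xi_{N,a}(s)).
\end{align*}
The right-hand side is a fixed Laurent polynomial in $q$ evaluated at $\xi_{N,a}(s)\to 1$, so it converges to a finite constant (a short computation using Example 2.5 shows this constant is $1$). Therefore $\log J_N^{SU(n)}(4_1;\xi_{N,a}(s))=O(1)$, and dividing by $N+1$ produces $0$ in the limit.

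For part (ii) with $a\in\{1,n-1\}$, applying the index shift $M=N-1$ turns Lemma 3.5 into
\begin{align*}
2\pi s\lim_{M\to\infty}\frac{\log J_M^{SU(n)}(4_1;\xi_{M,a}(s))}{M+1}=4\int_0^{5\pi/6}\log 2\sin(x)\,dx,
\end{align*}
so it remains only to identify this integral with $\mathrm{Vol}(S^3\setminus 4_1)+\sqrt{-1}\,\mathrm{CS}(S^3\setminus 4_1)$. With the Lobachevsky function $\Lambda(x)=-\int_0^x\log|2\sin t|\,dt$, the $\pi$-periodicity together with oddness gives $\Lambda(5\pi/6)=-\Lambda(\pi/6)$, so the integral equals $\Lambda(\pi/6)$ and the right-hand side becomes $4\Lambda(\pi/6)$. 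Milnor's duplication identity $\Lambda(\pi/3)=2\Lambda(\pi/6)-2\Lambda(\pi/3)$ then yields $4\Lambda(\pi/6)=6\Lambda(\pi/3)=\mathrm{Vol}(S^3\setminus 4_1)$. Since $4_1$ is amphichiral we have $\mathrm{CS}(S^3\setminus 4_1)=0$, which is consistent with what is already visible in the proof of Lemma 3.5: formulas (3.16) and (3.25) make each $f^{SU(n)}(N,k)$ a positive real, so $J_{N-1}^{SU(n)}(4_1;\xi_{N-1,a}(s))$ is a sum of positive reals whose logarithm is manifestly real.

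The main subtlety to watch out for is the well-definedness of $\log J$ in the limit of part (i), i.e., the non-vanishing of the limiting constant $J_k^{SU(n)}(4_1;1)$ for the chosen $k$; this is a direct finite check using Example 2.5, where one sees that every factor $C_{k+1,j}^{(n)}$ with $j\geq 1$ vanishes at $q=1$ while the $j=0$ term contributes $1$. Everything else is routine bookkeeping with the cyclotomic expansion and the classical Lobachevsky identities.
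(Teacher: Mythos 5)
Your proposal is correct and follows essentially the same route as the paper: part (i) is reduced to the congruent relation (2.15) for $4_1$ (which you obtain from the verified cyclotomic expansion of Example 2.5 rather than by citing Theorem 7.11 of \cite{CLPZ}, but the mechanism of choosing $k$ so that $[N-k]$ or $[N+k+n]$ vanishes at $\xi_{N,a}(s)$ is exactly the one the paper uses in Theorem 3.2), and part (ii) is Lemma 3.5 combined with the Lobachevsky identities, your chain $4\Lambda(\pi/6)=6\Lambda(\pi/3)$ being equivalent to the paper's $\Lambda(5\pi/6)=-\tfrac{3}{2}\Lambda(\pi/3)$. Your added checks --- that $J_k^{SU(n)}(4_1;1)=1$ so the logarithm is eventually well defined, and that $CS(S^3\setminus 4_1)=0$ --- are welcome details the paper leaves implicit.
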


\begin{proof}
The part (i) of Conjecture 3.3 is reduced to the congruent relation for
$4_{1}$ which has been proved in Theorem 7.11 in \cite{CLPZ}. So we only need
to prove part (ii) of Conjecture 3.3 when $a=1$ or $a=n-1$. By using the
property of the Lobachevsky function, we have
\begin{align}
\Lambda(\frac{5\pi}{6})=-\frac{3}{2}\Lambda(\frac{\pi}{3}).
\end{align}
Therefore, by Lemma 3.5
\begin{align}
2\pi s\lim_{N\rightarrow\infty}\frac{\log J_{N-1}^{SU(n)}(4_{1};\xi
_{N-1,a}(s))}{N} &  =4\int_{0}^{\frac{5\pi}{6}}\log2\sin(x)dx\\
&  =6\Lambda(\frac{\pi}{3})\nonumber\\
&  =Vol(S^{3}\setminus4_{1}).\nonumber
\end{align}
So Proposition 3.6 is proved.
\end{proof}

In order to prove part (ii) of the Conjecture 3.3 for figure-eight knot
$4_{1}$ completely, we first consider the case of $s=1$.

By the method used in the proof of the Lemma 3.5, the first step is to find a
maximal $k_{m}$, such that the function of $k$
\begin{align}
f^{SU(n)}(N,k)=\frac{\sin\frac{(n-2+k)\pi}{N+\tilde{a}}}{\sin\frac{k\pi
}{N+\tilde{a}}}4\left( \sin\frac{(k+\tilde{a})\pi}{N+\tilde{a}}\right) \left(
\sin\frac{((n-2)-\tilde{a}+k)\pi}{N+\tilde{a}}\right) ,
\end{align}
satisfies
\begin{align}
f^{SU(n)}(N,k_{m})\geq1, \ f^{SU(n)}(N,k_{m}+1)<1.
\end{align}

\begin{lemma}
Such $k_{m}$ must be in
\begin{align}
\lfloor\frac{5}{6}(N+\tilde{a})-2(n-2)\rfloor\leq k_{m}\leq\lfloor\frac{5}%
{6}(N+\tilde{a}) \rfloor.
\end{align}

\end{lemma}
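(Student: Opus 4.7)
The plan is to bracket $k_m$ by verifying two concrete inequalities at the endpoints of the stated interval and ruling out any additional transitions of $f^{SU(n)}(N,\cdot)$ through $1$ in between. Set $\theta=\pi/(N+\tilde{a})$ and $K=\lfloor\tfrac{5}{6}(N+\tilde{a})\rfloor$, so $K\theta\in(5\pi/6-\theta,\,5\pi/6]$; since $2(n-2)\in\mathbb{Z}_{\ge 0}$, the target reduces to $K-2(n-2)\le k_{m}\le K$.

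For the upper bound I would show $f^{SU(n)}(N,k)<1$ for every integer $k\in\{K+1,\ldots,N-1\}$, which then forces $k_m\le K$ from the defining property $f(k_m)\ge 1$. So long as all four sine arguments lie in $(0,\pi)$, i.e.\ $k<N+\tilde{a}-(n-2)$, each of $k\theta,\,(k+\tilde{a})\theta,\,(k+n-2-\tilde{a})\theta,\,(k+n-2)\theta$ exceeds $5\pi/6$ for $k\ge K+1$, so by strict monotonicity of $\sin$ on $[5\pi/6,\pi]$ every sine is below $\sin(5\pi/6)=1/2$. Hence the ratio $\sin((k+n-2)\theta)/\sin(k\theta)<1$ and $4\sin((k+\tilde{a})\theta)\sin((k+n-2-\tilde{a})\theta)<1$, so $f<1$. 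For the tail $k\in\{N+\tilde{a}-(n-2),\ldots,N-1\}$, where $(k+n-2)\theta\ge\pi$, a short sign analysis finishes the job: if only $\sin((k+n-2)\theta)$ flips sign then the ratio is $\le 0$ while the remaining factor is $\ge 0$, so $f\le 0$; if both $\sin((k+n-2)\theta)$ and $\sin((k+n-2-\tilde{a})\theta)$ flip, both flipped sines have magnitude $O(\theta)$ whereas $\sin(k\theta)=\Omega(\theta)$, so $|f|=O(\theta)<1$ for $N$ large.

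For the lower bound I would expand $\log f^{SU(n)}(N,k)$ about $u=k\theta$ using $\log\sin(u+c\theta)=\log\sin u+c\theta\cot u+O(\theta^{2})$; the three contributions from $c=\tilde{a}$, $c=n-2-\tilde{a}$ and the ratio term $c=n-2$ combine to
\begin{align*}
\log f^{SU(n)}(N,k)=2\log(2\sin(k\theta))+2(n-2)\theta\cot(k\theta)+O(\theta^{2}).
\end{align*}
At $k=K-2(n-2)$, write $k\theta=5\pi/6+\phi$ with $\phi\in(-(2n-3)\theta,\,-2(n-2)\theta]$; using $\cot(5\pi/6)=-\sqrt{3}$ and $2\log(2\sin(5\pi/6+\phi))=-2\sqrt{3}\phi+O(\phi^{2})$ one obtains
\begin{align*}
\log f^{SU(n)}(N,K-2(n-2))=-2\sqrt{3}\phi-2\sqrt{3}(n-2)\theta+O(\theta^{2})\ge 2\sqrt{3}(n-2)\theta+O(\theta^{2})>0
\end{align*}
for $N$ large, i.e.\ $f^{SU(n)}(N,K-2(n-2))>1$. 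Combining this with the upper bound yields at least one integer in $\{K-2(n-2),\ldots,K\}$ at which $f\ge 1$ and $f(\cdot+1)<1$, and since $k_m$ is the maximal such index, $k_m\ge K-2(n-2)$.

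The main obstacle I foresee is the wrap-around subcase of the upper bound, where the sine arguments cross $\pi$ near $k=N-1$; this has to be handled uniformly in $\tilde{a}\in\{0,1,\ldots,n-2\}$ since the sign-flip pattern depends on $\tilde{a}$. Once that is dispatched, the Taylor calculation driving the lower bound is routine bookkeeping once the cancellation between the ratio factor and the two symmetric sine factors is noticed.
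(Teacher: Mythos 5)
Your proof is correct and follows essentially the same strategy as the paper: bracket $k_m$ by showing $f^{SU(n)}(N,k)<1$ for all $k$ beyond $\lfloor\tfrac{5}{6}(N+\tilde{a})\rfloor$ and $f^{SU(n)}(N,k)>1$ at $k=\lfloor\tfrac{5}{6}(N+\tilde{a})\rfloor-2(n-2)$ via a first-order expansion of the sines near $5\pi/6$, where the $+2(n-2)\theta\cot(5\pi/6)=-2\sqrt{3}(n-2)\theta$ drift is beaten by the $-2\sqrt{3}\phi\ge 4\sqrt{3}(n-2)\theta$ gain from stepping back $2(n-2)$ units. The only differences are organizational: you expand $\log f$ in one stroke where the paper bounds the ratio (via $\cot\ge-\sqrt{3}$) and the product of sines (via monotonicity on $[\pi/2,\pi]$) separately, and you treat the wrap-around of the sine arguments past $\pi$ near $k=N-1$ explicitly, which the paper dismisses as ``clear.''
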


\begin{proof}
The upper bound of $k_{m}$ is clear, in fact, if $k_{m}\geq\lfloor\frac{5}%
{6}(N+\tilde{a})\rfloor$, then $f^{SU(n)}(N,k_{m})<1$. Now we need to estimate
a lower bound of $k_{m}$. We can assume
\begin{align}
\frac{1}{2}\leq\frac{k_{m}}{N+\tilde{a}}\leq\frac{5}{6}.
\end{align}
Since
\begin{align}
\frac{\sin\frac{(n-2+k)\pi}{N+\tilde{a}}}{\sin\frac{k\pi}{N+\tilde{a}}}%
=\sin\frac{(n-2)\pi}{N+\tilde{a}}\cot\frac{k\pi}{N+\tilde{a}}+\cos
\frac{(n-2)\pi}{N+\tilde{a}},
\end{align}
Set $\frac{(n-2)\pi}{N+\tilde{a}}=\alpha$, for $\frac{1}{2}\leq\frac{k}%
{N+a}\leq\frac{5}{6}$, we have
\begin{align}
\frac{\sin\frac{(n-2+k)\pi}{N+\tilde{a}}}{\sin\frac{k\pi}{N+\tilde{a}}}%
\geq1-\frac{1}{2}\alpha^{2}-\sqrt{3}\alpha,
\end{align}
where we used the inequalities:
\begin{align}
\sin\alpha<\alpha\ \text{and}\ \cos\alpha>1-\frac{1}{2}\alpha^{2},\ \text{for
small}\ \alpha>0.
\end{align}

Since $\frac{k}{N+\tilde{a}}\geq\frac{1}{2}$, we have
\begin{align}
4 \sin\frac{(k+\tilde{a})\pi}{N+\tilde{a}}\sin\frac{(n-2-\tilde{a}+k)\pi
}{N+\tilde{a}}\geq4(\sin\frac{(n-2+k)\pi}{N+\tilde{a}})^{2}=4(\sin(\frac{5\pi
}{6}-\beta))^{2},
\end{align}
where $\beta=\frac{(\frac{5}{6}(N+\tilde{a})-(n-2)-k)\pi}{N+\tilde{a}}$.

Therefore,
\begin{align}
4 \sin\frac{(k+\tilde{a})\pi}{N+\tilde{a}}\sin\frac{(n-2-\tilde{a}+k)\pi
}{N+\tilde{a}}\geq4(\sin(\frac{5\pi}{6}-\beta))^{2}=1+2\sqrt{3}\sin\beta
+2\sin^{2}\beta.
\end{align}

Combing (3.33) and (3.36), we have
\begin{align}
& \frac{\sin\frac{(n-2+k)\pi}{N+\tilde{a}}}{\sin\frac{k\pi}{N+\tilde{a}}%
}4\left( \sin\frac{(k+\tilde{a})\pi}{N+\tilde{a}}\right) \left( \sin
\frac{((n-2)-\tilde{a}+k)\pi}{N+\tilde{a}}\right) \\
& \geq(1-\sqrt{3}\alpha-\frac{1}{2}\alpha^{2})(1+2\sqrt{3}\sin\beta+2\sin
^{2}\beta)\nonumber\\
& =1+\sqrt{3}(2\beta-\alpha)+O(\alpha^{2})+O(\beta^{2})\nonumber
\end{align}
when $\alpha, \beta$ are both small enough.

If we let $k_{0}=\frac{5}{6}(N+\tilde{a})-2(n-2)$, then $\beta=\alpha$. By
(3.37), we have $f^{SU(n)}(N,k_{0})>1$. Hence, by the definition of $k_{m}$,
we must have $k_{m}\geq k_{0}$.
\end{proof}

Now, let us consider the case of general $s$. Without loss of generality, we
only need to prove the case that $N,s$ are coprime. For $k=1$ to $N-1$, by
formula $(3.15)$, The function $f^{SU(n)}(N,k)$ of $k$ has certain period.
Thus, as a function of $j$, one can show that $g^{SU(n)}(N,j)$ may take the
maximal value at $j_{1}=k_{m}^{(1)}$ or $j_{2}=k_{m}^{(2)}$,...,$j_{p}%
=k_{m}^{(p)}$,... or $j_{s}=k_{m}^{(s)}$, where
\begin{align}
\lfloor\frac{6p-1}{6s}(N+\tilde{a})-2(n-2)\rfloor\leq k_{m}^{(p)}\leq
\lfloor\frac{6p-1}{6s}(N+\tilde{a})\rfloor.
\end{align}

Without loss of generality, we may assume $g^{SU(n)}(N,j)$ take the maximal
value at $j_{p_{0}}=k_{m}^{(p_{0})}$, where $1\leq p_{0}\leq s$. Then
$g^{SU(n)}(N,k_{m}^{(p_{0})})\geq g^{SU(n)}(N,k_{m}^{(1)})>0$ (In fact,
$g^{SU(n)}(N,k_{m}^{(1)})>1$ for large $N$, because the integral $\int
_{0}^{\frac{5\pi}{6}}\log(2\sin(t))dt>0$). Next, we will show the following

\begin{lemma}%
\begin{align}
g^{SU(n)}(N,k_{m}^{p_{0}})\leq J_{N-1}^{SU(n)}(4_{1})\leq N\cdot
g^{SU(n)}(N,k_{m}^{p_{0}}).
\end{align}

\end{lemma}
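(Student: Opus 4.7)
The plan is to reduce the two-sided bound to a simple counting argument by first verifying that every summand in $J_{N-1}^{SU(n)}(4_{1})=\sum_{j=0}^{N-1}g^{SU(n)}(N,j)$ is a non-negative real number, and then identifying $g^{SU(n)}(N,j_{p_{0}})$ as the global maximum of $g^{SU(n)}(N,\cdot)$ on the index set $\{0,1,\dots,N-1\}$. Once both of these are in hand, the lower bound is just the observation that the maximum is itself one of the $N$ non-negative summands, and the upper bound is the observation that each of those $N$ summands is no larger than the maximum.

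For the non-negativity step, I would restrict to the cases $\tilde a=0$ (i.e.\ $a=1$) and $\tilde a=n-2$ (i.e.\ $a=n-1$) that drive the whole argument of Proposition~3.6: the cancellations leading to (3.26) and (3.30) reduce $f^{SU(n)}(N,k)$ to $\bigl(2\sin\tfrac{(n-2+k)s\pi}{N+\tilde a}\bigr)^{2}\geq 0$, and hence each partial product $g^{SU(n)}(N,j)$ is non-negative. Assuming $s$ and $N+\tilde a$ are coprime (a harmless assumption in the limit $N\to\infty$) also guarantees that the denominator $\sin\tfrac{ks\pi}{N+\tilde a}$ appearing in the general expression (3.15) never vanishes for $1\le k\le N-1$, so each factor is a well-defined real.

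For the maximality step, the recursion $g^{SU(n)}(N,j+1)=g^{SU(n)}(N,j)\,f^{SU(n)}(N,j+1)$ shows that $g^{SU(n)}(N,\cdot)$ is non-decreasing on indices where $f^{SU(n)}(N,\cdot)\geq 1$ and non-increasing where $f^{SU(n)}(N,\cdot)<1$. Since the squared-sine $f^{SU(n)}(N,k)$ has approximate period $(N+\tilde a)/s$ in $k$, Lemma~3.7 (applied period by period) locates exactly one transition index $k_{m}^{(p)}$ in each of the $s$ periods sitting inside $\{1,\dots,N-1\}$. Every local maximum of $g^{SU(n)}(N,\cdot)$ is therefore attained at one of $k_{m}^{(1)},\dots,k_{m}^{(s)}$, and by the very definition of $p_{0}$ the largest of these, $g^{SU(n)}(N,k_{m}^{(p_{0})})$, is the global maximum on $\{0,\dots,N-1\}$.

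The lemma then follows at once: the lower bound $g^{SU(n)}(N,k_{m}^{(p_{0})})\leq J_{N-1}^{SU(n)}(4_{1})$ holds because the maximum is one non-negative summand, and the upper bound $J_{N-1}^{SU(n)}(4_{1})\leq N\cdot g^{SU(n)}(N,k_{m}^{(p_{0})})$ holds because each of the $N$ summands is bounded above by the maximum. The one step where I would expect to have to be careful is verifying that the enumeration $k_{m}^{(1)},\dots,k_{m}^{(s)}$ really exhausts all local maxima of $g^{SU(n)}(N,\cdot)$ on $\{0,\dots,N-1\}$—that is, that no period is clipped at the boundary in a way that produces an unlisted maximum, and that the endpoints $j=0$ and $j=N-1$ do not themselves produce a larger value of $g^{SU(n)}$ than the interior maxima. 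Both checks are easily handled from the explicit squared-sine formula together with the range estimate in Lemma~3.7.
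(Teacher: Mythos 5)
Your upper bound is the same one-line argument as the paper's: $J_{N-1}^{SU(n)}(4_1)=1+\sum_{j=1}^{N-1}g^{SU(n)}(N,j)\le N\cdot g^{SU(n)}(N,k_m^{(p_0)})$ once $k_m^{(p_0)}$ is taken to be the index of the maximal term. The problem is your lower bound. You reduce it to the claim that every summand $g^{SU(n)}(N,j)$ is non-negative, and you justify that claim only for $\tilde a=0$ and $\tilde a=n-2$, where the four sine factors in $f^{SU(n)}(N,k)$ collapse to the perfect square $\bigl(2\sin\tfrac{(n-2+k)s\pi}{N+\tilde a}\bigr)^2$. But those two cases are exactly the ones already settled by Lemma 3.5 and Proposition 3.6; Lemma 3.8 exists precisely to handle the intermediate values $a\in\{2,\dots,n-2\}$, i.e.\ $\tilde a\in\{1,\dots,n-3\}$, where no such cancellation occurs. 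There the factors $\sin\tfrac{ks\pi}{N+\tilde a}$, $\sin\tfrac{(n-2+k)s\pi}{N+\tilde a}$, $\sin\tfrac{(k+\tilde a)s\pi}{N+\tilde a}$ and $\sin\tfrac{(n-2-\tilde a+k)s\pi}{N+\tilde a}$ can change sign at slightly different values of $k$ near each multiple of $(N+\tilde a)/s$, so $f^{SU(n)}(N,k)$ and hence some of the partial products $g^{SU(n)}(N,j)$ are genuinely negative, and the inequality $\sum_j g^{SU(n)}(N,j)\ge \max_j g^{SU(n)}(N,j)$ is no longer the trivial statement that a sum of non-negative terms dominates one of them.

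The paper's proof spends essentially all of its effort on this point. It locates the only indices where negative summands can occur, namely $j=k^{(i)}+l$ with $k^{(i)}=\lfloor i(N+\tilde a)/s\rfloor-(n-2)$, $i=1,\dots,p_0-1$, $l=0,\dots,n-2$; shows via the estimate $f^{SU(n)}(N,k^{(i)}+l)=O\bigl((N+\tilde a)^{-2}\bigr)$ that each such term is tiny relative to the adjacent positive term $g^{SU(n)}(N,k^{(i)}-1)$; and then groups each block as $g^{SU(n)}(N,k^{(i)}-1)\bigl(1+\sum_{l}\prod_{j\le l}f^{SU(n)}(N,k^{(i)}+j)\bigr)>0$, so that after regrouping the whole sum still dominates the single term $g^{SU(n)}(N,k_m^{(p_0)})$ (which is checked not to coincide with any of the exceptional indices). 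Without some version of this grouping-and-smallness argument your lower bound does not go through for the cases the lemma is actually needed for, so the proposal as written has a genuine gap. Your secondary worry about whether the $k_m^{(1)},\dots,k_m^{(s)}$ exhaust the local maxima is legitimate but minor by comparison, and is handled in the paper simply by defining $p_0$ through the global maximum rather than through a census of local maxima.
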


\begin{proof}
By definition
\begin{align}
J_{N-1}^{SU(n)}(4_{1})=1+\sum_{j=1}^{N-1}g^{SU(n)}(N,j)\leq N\cdot
g^{SU(n)}(N,k_{m}^{p_{0}})
\end{align}
on the other hand side, we let
\begin{align}
k^{(1)} &  =\lfloor\frac{(N+\tilde{a})}{s}\rfloor-(n-2),\\
k^{(2)} &  =\lfloor\frac{2(N+\tilde{a})}{s}\rfloor-(n-2),\nonumber\\
&  ....\nonumber\\
k^{(p_{0}-1)} &  =\lfloor(p_{0}-1)(\frac{N+\tilde{a}}{s})\rfloor
-(n-2).\nonumber
\end{align}
In the summation of $\sum_{j=1}^{N-1}g^{SU(n)}(N,j)$, the possible negative
terms are only in the following list:
\begin{align}
g^{SU(n)}(N,k^{(i)}+l),\ \text{for}\ i=1,..,p_{0}-1,\ \text{and}\ l=0,...,n-2.
\end{align}
Since $k^{(i)}+l=\lfloor\frac{i(N+\tilde{a})}{s}\rfloor-(n-2)+l=\frac
{i(N+\tilde{a})}{s}-\langle\frac{i(N+\tilde{a})}{s}\rangle-(n-2)+l$, where we
use $\langle\frac{i(N+\tilde{a})}{s}\rangle$ to denote the fractional part of
$\frac{i(N+\tilde{a})}{s}$, hence $0\leq\langle\frac{i(N+\tilde{a})}{s}%
\rangle<1$. Therefore
\begin{align}
\sin\frac{(x+k^{(i)}+l)s\pi}{N+\tilde{a}} &  =\sin(i\pi-\frac{(\langle
\frac{i(N+\tilde{a})}{s}\rangle+(n-2)-l-x)s\pi}{N+\tilde{a}})\\
&  =(-1)^{i-1}\sin(\frac{(\langle\frac{i(N+\tilde{a})}{s}\rangle
+(n-2)-l-x)s\pi}{N+\tilde{a}}),\nonumber
\end{align}
where $x$ stands for $0,n-2,n-2-\tilde{a}$ and $\tilde{a}$. So for large $N$,
$\sin\frac{(x+k^{(i)}+l)s\pi}{N+\tilde{a}}=O(\frac{1}{N+\tilde{a}})$. Thus, we
have
\begin{align}
f^{SU(n)}(N,k^{(i)}+l)=O((\frac{1}{N+\tilde{a}})^{2}),
\end{align}
then
\begin{align}
\overset{l}{\underset{j=0}{\prod}}f^{SU(n)}(N,k^{(i)}+j)=O((\frac{1}%
{N+a})^{2(l+1)})
\end{align}

and%
\begin{align}
1+\overset{n-2}{\underset{l=0}{\sum}}\overset{l}{\underset{j=0}{\prod}%
}f^{SU(n)}(N,k^{(i)}+j)>0.
\end{align}

Therefore,%

\begin{align}
g^{SU(n)}(N,k^{(i)}-1)+\underset{l=0}{\overset{n-2}{\sum}}g^{SU(n)}%
(N,k^{(i)}+l)=g^{SU(n)}(N,k^{(i)}-1)(1+\overset{n-2}{\underset{l=0}{\sum}%
}\overset{l}{\underset{j=0}{\prod}}f^{SU(n)}(N,k^{(i)}+j))>0
\end{align}

Thus we have%

\begin{align*}
J_{N}^{SU(n)}(4_{1})  & =\underset{%
\begin{array}
[c]{c}%
u=0\text{ and }u\neq k^{(i)}+l,\\
\text{ where }l=-1,0,...,n-2
\end{array}
}{\sum}g^{SU(n)}(N,u)\\
& +\overset{s}{\underset{i=0}{\sum}}\left(  g^{SU(n)}(N,k^{(i)}-1)+\overset
{s}{\underset{l=0}{\sum}}g^{SU(n)}(N,k^{(i)}+l)\right)  \\
& >\overset{N}{\underset{%
\begin{array}
[c]{c}%
u=0\text{ and }u\neq k^{(i)}+l,\\
\text{ where }l=-1,0,...,n-2
\end{array}
}{\sum}}g^{SU(n)}(N,u)
\end{align*}

It is easy to know that each term $g^{SU(n)}(N,u)$ in the above expression is
positive. For large $N$, it is impossible that $k^{(i)}+l=k_{m}^{(j)}$ for any
pair $(i,j,l)$, where $1\leq i,j\leq s,l=-1,0,...,n-2$. Thus we have
\begin{align}
J_{N}^{SU(n)}(4_{1})=\overset{N}{\underset{u=0}{\sum}}g^{SU(n)}(N,u)\geq
g^{SU(n)}(N,k_{m}^{(p_{0})})
\end{align}

\end{proof}

\begin{proposition}
Part (ii) of the Conjecture 3.3 holds for the figure-eight knot $4_{1}$.
\end{proposition}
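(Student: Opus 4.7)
The strategy is to combine the sandwich inequality of Lemma 3.8 with the asymptotic location of the maximizing index $k_m^{(p_0)}$ supplied by Lemma 3.7, reducing the limit to a single Riemann integral whose value is independent of the choice of $a\in\{1,\ldots,n-1\}$ and equals the hyperbolic volume of $S^3\setminus 4_1$. Propositions 3.6 already handles the special endpoints $a=1$ and $a=n-1$, so the content of Proposition 3.9 is that the same limit persists for every intermediate $a$.

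First I would take logarithms in (3.39) and divide by $N$. Since $\log N/N\to 0$, the two sides of the sandwich force
$$\lim_{N\to\infty}\frac{\log J_{N-1}^{SU(n)}(4_1;\xi_{N-1,a}(s))}{N}=\lim_{N\to\infty}\frac{\log g^{SU(n)}(N,k_m^{(p_0)})}{N}=\lim_{N\to\infty}\frac{1}{N}\sum_{k=1}^{k_m^{(p_0)}}\log|f^{SU(n)}(N,k)|,$$
with $f^{SU(n)}(N,k)$ given by (3.15). I would then split each summand into four $\log|\sin|$ pieces. The ratio $\sin\tfrac{(n-2+k)s\pi}{N+\tilde a}\big/\sin\tfrac{ks\pi}{N+\tilde a}$ telescopes under summation, leaving only $2(n-2)$ boundary $\log|\sin|$ terms, each of size $O(\log N)$ and therefore negligible after division by $N$. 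The remaining two factors $2\sin\tfrac{(k+\tilde a)s\pi}{N+\tilde a}$ and $2\sin\tfrac{(k+n-2-\tilde a)s\pi}{N+\tilde a}$ are shifts of the universal kernel $2\sin\tfrac{ks\pi}{N+\tilde a}$ by the $N$-independent constants $\tilde a$ and $n-2-\tilde a$; these shifts alter only finitely many summands, hence contribute identically to the Riemann-sum limit.

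Using Lemma 3.7 with $k_m^{(p_0)}/(N+\tilde a)\to (6p_0-1)/(6s)$, each of the two surviving sums converges to $\tfrac{1}{\pi s}\int_0^{(6p_0-1)\pi/6}\log|2\sin t|\,dt$. By $\pi$-periodicity and oddness of the Lobachevsky function $\Lambda(x)=-\int_0^x\log|2\sin t|\,dt$, one has $\int_0^{(6p_0-1)\pi/6}\log|2\sin t|\,dt=-\Lambda(5\pi/6)=\int_0^{5\pi/6}\log(2\sin t)\,dt$ for every $1\le p_0\le s$. Multiplying by $2\pi s$ and using $\Lambda(5\pi/6)=-\tfrac{3}{2}\Lambda(\pi/3)$ together with $\mathrm{Vol}(S^3\setminus 4_1)=6\Lambda(\pi/3)$ yields
$$2\pi s\lim_{N\to\infty}\frac{\log J_{N-1}^{SU(n)}(4_1;\xi_{N-1,a}(s))}{N}=4\int_0^{5\pi/6}\log(2\sin t)\,dt=6\Lambda(\pi/3)=\mathrm{Vol}(S^3\setminus 4_1),$$
matching the real part of the claim. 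Since the figure-eight knot is amphichiral, $CS(S^3\setminus 4_1)=0$, so the imaginary part of the conjectured right-hand side is zero and agrees with the real limit obtained above.

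The main obstacle is a uniform control of the clustered terms indexed by $k^{(i)}+l$ identified in the proof of Lemma 3.8: for these clusters one or more sines in $f^{SU(n)}(N,k)$ have arguments of order $1/N$, so their logarithms are $O(\log N)$ and could in principle perturb the Riemann-sum asymptotics. The key observation is that there are only $O(s)$ such clusters, each consisting of $n-1$ consecutive indices, so the total perturbation to $\sum_{k=1}^{k_m^{(p_0)}}\log|f^{SU(n)}(N,k)|$ is $O(\log N)$; dividing by $N$ it disappears in the limit. Carrying this bookkeeping out uniformly in $a\in\{1,\ldots,n-1\}$ is the only technical point, and it is structurally the same argument as in Lemma 3.5 and Proposition 3.6, just applied to the four-sine decomposition of (3.15) rather than to the square that appears when $\tilde a=0$ or $\tilde a=n-2$.
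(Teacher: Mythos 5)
Your proof is correct and follows essentially the same route as the paper: the sandwich from Lemma 3.8, reduction to a Riemann sum over the four-sine factorization of $f^{SU(n)}(N,k)$ with the maximizing index located by Lemma 3.7 and the formula (3.38), and evaluation via the Lobachevsky function $\Lambda$. The only (harmless) variations are that you telescope the ratio $\sin\frac{(n-2+k)s\pi}{N+\tilde a}/\sin\frac{ks\pi}{N+\tilde a}$ rather than treating numerator and denominator as two cancelling Riemann sums, and that you make explicit both the periodicity step $\Lambda((p_0-\tfrac16)\pi)=\Lambda(\tfrac{5\pi}{6})$ and the vanishing of $CS(S^3\setminus 4_1)$, points the paper leaves implicit.
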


\begin{proof}
Now, we can finish the proof of proposition as follow:
\begin{align}
&  \lim_{N\rightarrow\infty}\frac{\log(J_{N-1}^{SU(n)}(4_{1}))}{N}\\
&  =\lim_{N\rightarrow\infty}\frac{\log(g^{SU(n)}(N,k_{m}^{(p_{0})}))}%
{N}\nonumber\\
&  =\lim_{N\rightarrow\infty}\frac{1}{N}\sum_{k=1}^{k_{m}^{(p_{0})}}%
\log\left(  4\frac{|\sin\frac{(n-2+k)s\pi}{N+\tilde{a}}|}{|\sin\frac{ks\pi
}{N+\tilde{a}}|}|\sin\frac{(k+\tilde{a})s\pi}{N+\tilde{a}}||\sin
\frac{((n-2)-\tilde{a}+k)s\pi}{N+\tilde{a}}|\right)  \nonumber
\end{align}
where we have used the fact $g^{SU(n)}(N,k_{m}^{(p_{0})})>0$, thus the number
of the negative term of the form $\sin\frac{(x+k)s\pi}{N+\tilde{a}}$ in the
product $g^{SU(n)}(N,k_{m}^{(p_{0})})$ must be even. For each term of above
with the form $\log\left(  |\sin\frac{(x+k)s\pi}{N+\tilde{a}}|\right)  $, by
the method in the proof of Lemma 3.5, we have
\begin{align}
\lim_{N\rightarrow\infty}\frac{1}{N}\sum_{k=1}^{k_{m}^{(p_{0})}}\log\left(
|\sin\frac{(x+k)s\pi}{N+\tilde{a}}|\right)  =\frac{1}{\pi}\int_{0}^{\frac
{5}{6}\pi}\log|\sin(t)|dt.
\end{align}
Finally, we obtain
\begin{align}
\lim_{N\rightarrow\infty}\frac{\log(J_{N-1}^{SU(n)}(4_{1}))}{N}=(\frac{p_{0}%
}{s}-\frac{1}{6s})\log4+\frac{2}{s\pi}\int_{0}^{(p_{0}-\frac{1}{6})\pi}%
\log|\sin(t)|dt=\frac{2}{s\pi}\int_{0}^{(p_{0}-\frac{1}{6})\pi}\log
(2|\sin(t)|)dt.
\end{align}
Finally we have
\begin{align}
2\pi s\lim_{N\rightarrow\infty}\frac{\log J_{N-1}^{SU(n)}(4_{1};\xi
_{N-1,a}(s))}{N}  & =4\int_{0}^{(p_{0}-\frac{1}{6})\pi}\log(2|\sin
(t)|)dt=-4\Lambda((p_{0}-\frac{1}{6})\pi)\\
& =-4\Lambda(\frac{5}{6}\pi)=6\Lambda(\frac{\pi}{3})=Vol(S^{3}\setminus4_{1}).
\end{align}

\end{proof}

\clearpage

\section{Appendix}
\subsection{Example of Conjectures 1.4}
We define
\begin{align}
Q(N,n,a,s)=2\pi s[\log J_{N}^{SU(n)}%
(\mathcal{K};\xi_{N,a}(s))-\log
J_{N-1}^{SU(n)}(\mathcal{K};\xi_{N-1,a}(s))],
\end{align}
and compute its limit $\lim_{N\rightarrow \infty} Q(N,n,a,s)$.
\subsubsection{The knot $5_2$}
We know that $Vol(S^{3}/5_{2})\approx2.828122$ and
$0.1532041333\ast2\pi^{2}=3.02413$. So the complex volume for
hyperbolic knot $5_{2}$ is $2.828122+3.02413\sqrt {-1}$. By using
the formula for $SU(n)$ invariant of $5_2$ in \cite{Kaw2}. We have
the following table. Here we set $s=1$.

$%
\begin{array}
[c]{cccc}%
N\backslash(n,a) & (2,1) & (3,1) & (3,2)\\
10 & 3.73795+2.62595\sqrt{-1} & 4.95561+1.83803\sqrt{-1} &
4.77077+2.02852\sqrt{-1}\\
20 & 3.27786+2.92530\sqrt{-1} & 3.90996+2.71482\sqrt{-1} &
3.85936+2.74299\sqrt
{-1}\\
30 & 3.13249+2.97960\sqrt{-1} & 3.55378+2.88496\sqrt{-1} &
3.53064+2.89368\sqrt
{-1}\\
40 & 3.05822+2.99885\sqrt{-1} & 3.37391+2.94535\sqrt{-1} &
3.36071+2.94911\sqrt{-1}\\
50 & 3.01308+3.00786\sqrt{-1} & 3.26546+2.97352\sqrt{-1} &
3.25694+2.97547\sqrt{-1}\\
70 & 2.96096+3.01577\sqrt{-1} & 3.14105+2.99819\sqrt{-1} &
3.13666+2.99891\sqrt{-1}\\
100 & 2.92148+3.02001\sqrt{-1} & 3.04745+3.01138\sqrt{-1} &
3.04528+3.01163\sqrt{-1}\\
200 & 2.87502+3.02309\sqrt{-1} & 2.93794+3.02093\sqrt{-1} &
2.93739+3.02096\sqrt{-1}%
\end{array}
$

\bigskip

$%
\begin{array}
[c]{cccc}%
N\backslash(n,a) & (4,1) & (4,2) & (4,3)\\
10 & 6.23105+0.579569\sqrt{-1} & 5.80661+0.952116\sqrt{-1} &
5.70074+1.31854\sqrt{-1}\\
20 & 4.56915+2.40633\sqrt{-1} & 4.43367+2.42953\sqrt{-1} &
4.41159+2.51307\sqrt{-1}\\
30 & 3.98932+2.74861\sqrt{-1} & 3.92527+2.74711\sqrt{-1} &
3.91657+2.78186\sqrt{-1}\\
40 & 3.69821+2.86884\sqrt{-1} & 3.66116+2.86452\sqrt{-1} &
3.65662+2.88323\sqrt{-1}\\
50 & 3.52356+2.92461\sqrt{-1} & 3.49946+2.92049\sqrt{-1} &
3.49671+2.93210\sqrt
{-1}\\
70 & 3.32419+2.97327\sqrt{-1} & 3.31168+2.97036\sqrt{-1} &
3.31036+2.97605\sqrt{-1}\\
100 & 3.17494+2.99918\sqrt{-1} & 3.16873+2.99744\sqrt{-1} &
3.16812+3.00014\sqrt{-1}\\
200 & 3.00124+3.01788\sqrt{-1} & 2.99967+3.01736\sqrt{-1} &
2.99953+3.01800\sqrt{-1}%
\end{array}
$

From the above tables,  one can see that the $SU(2)$ invariants, i.e., the colored Jones polynomials, converge to the complex volume faster
than the general $SU(n)$ invariants.   As to the $SU(n)$ invariants,
one can see that the $SU(n)$ invariants converge to the complex
volume at $a=n-1$ faster than at the other values  $a=1,..,n-2$.

\subsection{Examples of Conjecture 1.3}

\smallskip

Case of knot $5_{2}$

\smallskip

The $SU(2)$ invariant:

$H_{0}=1$

$H_{1}=-q^{4}(1+q^{4})$

$H_{2}=q^{10}(1+q^{4}+q^{6}+q^{12})$

$H_{3}=-q^{18}(1+q^{4}+q^{6}+q^{8}+q^{12}+q^{14}+q^{16}+q^{24})$

$H_{4}=q^{28}(1+q^{4}+q^{6}+q^{8}+q^{10}+q^{12}+q^{14}+2q^{16}+q^{18}+q^{20}%
+q^{24}+q^{26}+q^{28}+q^{30}+q^{40})$

\bigskip

The $SU(3)$ invariant:

$H_{0}=1$

$H_{1}=-q^{5}(1+q^{2})^{2}(1-q^{2}+q^{4})$

$H_{2}=q^{12}(1+q^{2}+q^{4})(1+q^{6}+q^{8}+q^{16})$

$H_{3}=-q^{21}(1+q^{2})^{2}(1+q^{4})(1-q^{2}+q^{4}+q^{8}+q^{16}+q^{20}%
-q^{22}+q^{24}-q^{26}+q^{28})$

$H_{4}=q^{32}(1+q^{2}+q^{4}+q^{6}+q^{8})(1+q^{6}+q^{8}+q^{10}+q^{12}%
+q^{16}+q^{18}+2q^{20}+q^{22}+q^{24}+q^{30}+q^{32}+q^{34}+q^{36}+q^{48})$

\bigskip

The $SU(4)$ invariant:

$H_{0}=1$

$H_{1}=-q^{6}(1+q^{2}+q^{4}+q^{8}+q^{10}+q^{12})$

$H_{2}=q^{14}(1+q^{2}+q^{4})(1+q^{4}+q^{8}+q^{10}+q^{12}+q^{14}+q^{20}%
+q^{24})$

$H_{3}=-q^{24}(1+q^{4})^{2}(1+q^{2}+2q^{8}+2q^{10}+q^{12}+q^{14}%
+2q^{16}+q^{18}+q^{22}+3q^{24}+2q^{26}+q^{32}+q^{38}+q^{40})$

$H_{4}=q^{36}(1+q^{2}+q^{4})(1+q^{2}+q^{4}+q^{6}+q^{8})(1-q^{2}+q^{4}%
+q^{8}+q^{12}+q^{14}+q^{18}+q^{20}+2q^{24}+2q^{28}+q^{32}+q^{36}+q^{40}%
+q^{42}+q^{46}+q^{56}-q^{58}+q^{60})$

\bigskip

Case of knot $6_{1}$

\smallskip

The $SU(2)$ invariant:

$H_{0}=1$

$H_{1}=1+q^{4}$

$H_{2}=1+q^{4}+q^{6}+q^{12}$

$H_{3}=1+q^{4}+q^{6}+q^{8}+q^{12}+q^{14}+q^{16}+q^{24}$

$H_{4}=1+q^{4}+q^{6}+q^{8}+q^{10}+q^{12}+q^{14}+2q^{16}+q^{18}+q^{20}%
+q^{24}+q^{26}+q^{28}+q^{30}+q^{40}$

\bigskip

The $SU(3)$ invariant:

$H_{0}=1$

$H_{1}=q^{-1}(1+q^{2})^{2}(1-q^{2}+q^{4})$

$H_{2}=q^{-2}(1+q^{2}+q^{4})(1+q^{6}+q^{8}+q^{16})$

$H_{3}=q^{-3}(1+q^{2})^{2}(1+q^{4})(1-q^{2}+q^{4}+q^{8}+q^{16}+q^{20}%
-q^{22}+q^{24}-q^{26}+q^{28})$

$H_{4}=q^{-4}(1+q^{2}+q^{4}+q^{6}+q^{8})(1+q^{6}+q^{8}+q^{10}+q^{12}%
+q^{16}+q^{18}+2q^{20}+q^{22}+q^{24}+q^{30}+q^{32}+q^{34}+q^{36}+q^{48})$

\bigskip

The $SU(4)$ invariant:

$H_{0}=1$

$H_{1}=q^{-2}(1+q^{2}+q^{4}+q^{8}+q^{10}+q^{12})$

$H_{2}=q^{-4}(1+q^{2}+q^{4})(1+q^{4}+q^{8}+q^{10}+q^{12}+q^{14}+q^{20}%
+q^{24})$

$H_{3}=q^{-6}(1+q^{4})^{2}(1+q^{2}+2q^{8}+2q^{10}+q^{12}+q^{14}+2q^{16}%
+q^{18}+q^{22}+3q^{24}+2q^{26}+q^{32}+q^{38}+q^{40})$

$H_{4}=q^{-8}(1+q^{2}+q^{4})(1+q^{2}+q^{4}+q^{6}+q^{8})(1-q^{2}+q^{4}%
+q^{8}+q^{12}+q^{14}+q^{18}+q^{20}+2q^{24}+2q^{28}+q^{32}+q^{36}+q^{40}%
+q^{42}+q^{46}+q^{56}-q^{58}+q^{60})$

\clearpage


\begin{thebibliography}{a}


\bibitem{CLPZ} Q. Chen, K. Liu, P. Peng and S. Zhu, {\em Colored HOMFLY-PT invariants:
integrality, symmetries and congruent skein relations}, submitted.(original arXiv: 1402.3571)


\bibitem{FGS} H. Fuji, S. Gukov and P. Sulkowski, {\em Volume Conjecture: Refined and Categorified},
arXiv: 1203.2182.

\bibitem{Hab} K. Habiro, \emph{A unified Witten-Reshetikhin-Turaev invariant
for integral homology spheres}, Invent. Math. 171 (2008), no. 1,
1-81. MR 2358055 (2009b:57020).

\bibitem{IMMM} H. Itoyama, A. Mironov, A. Morozov and An. Morozov, \emph{HOMFLY and superpolynomials for figure eight knot in all symmetric and antisymmetric representations}, arXiv:1203.5978.

\bibitem{Mu} H. Murakami, \emph{An introduction to the volume conjecture},
Interactions between hyperbolic geometry, quantum topology and
number theory, 1-40, Contemp. Math., 541, Amer. Math. Soc.,
Providence, RI, 2011.

\bibitem{MuMu} H. Murakami and J. Murakami, \emph{The colored Jones polynomials and
the simplicial volume of a knot}, Acta Math. 186 (2001), no. 1,85-104.

\bibitem{MMOTY}H. Murakami, J. Murakami, M. Okamoto, T. Takata and Y. Yokota (2002), \emph{Kashaev¡¯s conjecture¡¡and the Chern-Simons invariants of knots and links}, Experimental Mathematics 11 (1): 427¨C435, arXiv:math/0203119.

\bibitem{MA} H.R. Morton and A.K. Aiston. \emph{Idempotents of Hecke algebra of type
A}. J. Knot Theory Ramif., 7: 463-487, 1998.

\bibitem{Kashaev} R. Kashaev, \emph{The hyperbolic volume of knots from the quantum
dilogarithm}, Lett. Math. Phys. 39 (1997), no. 3, 269-275.


\bibitem{Kaw1} K. Kawagoe, \emph{Limits of the HOMFLY polynomials of the
figure-eight knot}. Intelligence of low dimen- sional topology 2006,
143¨C150, Ser. Knots Everything, 40, World Sci. Publ., Hackensack,
NJ, 2007.

\bibitem{Kaw2} K. Kawagoe, \emph{On the formulae for the colored HOMFLY
polynomials}, arXiv: 1210.7574.


\bibitem{NO} S. Nawata and A. Oblomkov, \emph{Lectures on knot homology},
arXiv: 1510.01795.


\end{thebibliography}
\end{document}